\newtheorem{thm}{Theorem}[section]
\newtheorem{coro}[thm]{Corollary}
\newtheorem{prop}[thm]{Proposition}
\newtheorem{lemma}[thm]{Lemma}
\newtheorem{conjecture}[thm]{Conjecture}
\theoremstyle{definition}
\newtheorem{definition}[thm]{Definition}
\newtheorem{remark}[thm]{Remark}
\theoremstyle{remark}
\DeclareMathOperator{\Cl}{Cl}
\DeclareMathOperator{\Det}{Det}
\DeclareMathOperator{\Gal}{Gal}
\DeclareMathOperator{\Hom}{Hom}
\newcommand{\QQ}{\mathbb{Q}}
\newcommand{\ZZ}{\mathbb{Z}}
\newcommand{\Z}{\mathbb{Z}}
\newcommand{\calM}{\mathcal{M}}
\def\bigcapp{\raise1ex\hbox{\rotatebox{180}{$\biguplus$}}}
\def\bigcappd{\raise1ex\hbox{\rotatebox{180}{$\displaystyle\biguplus$}}}
\begin{document}
\title[On the Galois-Gauss sums of weakly ramified characters]{On the Galois-Gauss sums\\
of weakly ramified characters}
\author{Y. Kuang}
\date{06 March 2023}
\email[Y. Kuang]{yu.kuang.3@gmail.com}
\address{Zhuhai, China}
	
\begin{abstract}
Bley, Burns and Hahn used relative algebraic $K$-theory methods to formulate a precise conjectural link between the (second Adams-operator twisted) Galois-Gauss sums of weakly ramified Artin characters and the square root of the inverse different of finite, odd degree, Galois extensions of number fields. We provide concrete new evidence for this conjecture in the setting of extensions of odd prime-power degree by using a refined version of a well-known result of Ullom.
\end{abstract}
	
\maketitle

%The text proper begins here
\section{Introduction}
The subject of `classical' Galois module theory was developed by Fr\"ohlich, M. Taylor and others during the 1970's and 1980's (see \cite{F83}). 

At the heart of this theory was the amazingly close interplay between arithmetic properties of the Galois-Gauss sums that are associated to tamely ramified Artin characters and the structure of Hermitian-Galois modules that are attached to rings of algebraic integers in tamely ramified extensions of number fields. 

Much subsequent work has been dedicated to attempts to prove a natural extension of this theory in the setting of Artin characters that are not tamely ramified. 

Nevertheless, even today, the arithmetic properties of the Galois-Gauss sums of such characters are still, in general, very poorly understood.

However, for one special class of wildly ramified characters, important progress has been made. 

To discuss this, we fix an {\em odd degree} Galois extension of number fields $L/K$, with $G := {\rm Gal}(L/K)$, and we write $\mathcal{D}_{L/K}$ for the different of $L/K$. Then, by Hilbert's classical formula for the valuation of $\mathcal{D}_{L/K}$, there exists a unique $G$-stable fractional ideal $\mathcal{A}_{L/K}$ of $L$ such that
\[ (\mathcal{A}_{L/K})^2 =(\mathcal{D}_{L/K})^{-1}.  \] 

Further, if $L/K$ is `weakly ramified' in the sense of \cite{E} (i.e. the second ramification subgroup in $G$ of every place of $K$ is trivial, which, in particular, permits $L/K$ to be wildly ramified), then Erez has shown that $\mathcal{A}_{L/K}$ is a projective $G$-module and has suggested that the Hermitian-Galois structure associated to this module should provide an appropriate analogue of the ring of integers for such extensions.

This aspect of the theory of $\mathcal{A}_{L/K}$ was developed in a series of articles of Erez and others and culminated in the article \cite{ET} of Erez and Taylor which showed that for {\em tamely} ramified extensions $L/K$ of odd degree, the Hermitian-Galois structures that are attached to $\mathcal{A}_{L/K}$ are completely determined by the `twisted' Galois-Gauss sums that are obtained by combining the classical Galois-Gauss sum with the action on characters of a second Adams operator. 

It is, of course, natural to hope that such results could be extended to the setting of extensions that are weakly and wildly (rather than tamely) ramified. 

This has, however, proved to be a very difficult, and rather technical, problem and the most impressive results to date are contained in a series of articles of Vinatier (see, for example, \cite{V} and \cite{V2}), of Pickett and Vinatier \cite{PV}, of Pickett and Thomas \cite{PT} in which weakly (wildly) ramified extensions satisfying a variety of restrictive hypotheses are considered. 

Next we note that, taking inspiration from a very different direction (relating to leading term conjectures), Bley, Burns and Hahn \cite{BBH} (and see also the related PhD thesis \cite{hahn} of Hahn) have more recently introduced techniques of relative algebraic $K$-theory to this area of research in order to formulate a precise conjectural link between twisted Galois-Gauss sums of weakly ramified Artin characters and the square root of the inverse different. 

In this way, they have, in particular, been able to explain how much of the theory developed by Erez, by Erez and Taylor and by Vinatier could be refined in a natural way.

To be more precise we write ${\rm Cl}(\mathbb{Z}[G])$ for the reduced projective class group of $\mathbb{Z}[G]$ and $K_0(\mathbb{Z}[G],\mathbb{Q}^c[G])$ for the relative algebraic $K_0$-group of the ring inclusion $\mathbb{Z}[G]\subset \mathbb{Q}^c[G]$.

Then in \cite{BBH}, the authors describe a canonical pre-image $\mathfrak{a}_{L/K}$ of the stable-isomorphism class $[\mathcal{A}_{L/K}]$ of the (projective) module $\mathcal{A}_{L/K}$ in ${\rm Cl}(\mathbb{Z}[G])$ under the (surjective) connecting homomorphism of relative $K$-theory 
$\partial_G: K_0(\mathbb{Z}[G],\mathbb{Q}^c[G])\to {\rm Cl}(\mathbb{Z}[G])$. 

We recall that the kernel of the homomorphism $\partial_G$ is very large: for example, if $G$ is trivial, then ${\rm Cl}(\mathbb{Z}[G])$ vanishes, whilst the kernel of $\partial_G$ is isomorphic to the multiplicative group $\QQ^{c\times}/\{\pm 1\}$. For this reason, the study of $\mathfrak{a}_{L/K}$ is, at least in principle, likely to be much more difficult than that of $[\mathcal{A}_{L/K}]$.

Nevertheless, by using techniques of classical Galois module theory, Bley, Burns and Hahn were able to prove (in \cite[Th. 5.2]{BBH}) that $\mathfrak{a}_{L/K}$ belongs to the torsion subgroup $K_0(\mathbb{Z}[G],\mathbb{Q}[G])_{\rm tor}$ of the subgroup $K_0(\mathbb{Z}[G],\mathbb{Q}[G])$ of $K_0(\mathbb{Z}[G],\mathbb{Q}^c[G])$, that the elements $\mathfrak{a}_{L/K}$ have good functorial properties under change of extension $L/K$ and that $\mathfrak{a}_{L/K}$ simultaneously controls both the Hermitian-Galois structures and metrized structures that arise naturally from the (self-dual) module $\mathcal{A}_{L/K}$.

To study the element $\mathfrak{a}_{L/K}$ further, the authors also define a canonical variant of the classical `unramified characteristic' that has played a key role in the results of Fr\"ohlich, Taylor et al. The elementary definition of this `idelic twisted unramified characteristic' $\mathfrak{c}_{L/K}$ of $L/K$  makes it clear that it also belongs to $K_0(\mathbb{Z}[G],\mathbb{Q}[G])_{\rm tor}$ and enjoys the same functoriality properties under change of extension as does $\mathfrak{a}_{L/K}$. 

When taken together with extensive numerical computations, these facts motivated Bley, Burns and Hahn to formulate the following conjecture.

\begin{conjecture}[{\cite[Conjecture 10.7]{BBH}}]\label{bbh conj} If $L/K$ is any weakly ramified Galois extension of number fields of odd degree, then one has $\mathfrak{a}_{L/K} = \mathfrak{c}_{L/K}$.
\end{conjecture}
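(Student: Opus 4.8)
The plan is to prove the equality $\mathfrak{a}_{L/K}=\mathfrak{c}_{L/K}$ by combining a functorial reduction of the group $G$ with a place-by-place analysis of both invariants, concentrating all of the content at the weakly (and wildly) ramified places. As a first step I would pass from the relative $K$-group to the Fr\"ohlich-style Hom-description: since both $\mathfrak{a}_{L/K}$ and $\mathfrak{c}_{L/K}$ lie in $K_0(\mathbb{Z}[G],\mathbb{Q}[G])_{\mathrm{tor}}$, each is represented by a Galois-equivariant homomorphism on the ring $R_G$ of virtual characters of $G$ valued in a suitable idele group — the second Adams operator twisted Galois-Gauss sum homomorphism for $\mathfrak{a}_{L/K}$, and the idelic twisted unramified characteristic for $\mathfrak{c}_{L/K}$. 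In this language the asserted identity becomes the vanishing, for every $\chi\in R_G$, of the class of the quotient of these two idelic functions, and that quotient decomposes as a restricted product over the places $v$ of $K$. At the archimedean, unramified and tamely ramified places the required local equality follows from the classical theory of Erez and of Erez--Taylor, so the entire problem is concentrated at the finitely many places that are weakly and wildly ramified in $L/K$.

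Second, I would reduce the group $G$ by Brauer detection. Since $\delta:=\mathfrak{a}_{L/K}-\mathfrak{c}_{L/K}$ is finite torsion and $K_0(\mathbb{Z}[-],\mathbb{Q}[-])$ is a module over the representation-ring Green functor, Brauer's induction theorem together with the projection formula shows that $\delta$ is detected by its restrictions $\mathrm{res}_E(\delta)$ to the $p$-elementary subgroups $E\le G$, as $p$ ranges over the primes dividing $|G|$; thus it suffices to prove $\delta=0$ after restriction to each such $E$. By the functoriality of both invariants under change of extension established in \cite{BBH}, $\mathrm{res}_E$ carries $\mathfrak{a}_{L/K}$ and $\mathfrak{c}_{L/K}$ to $\mathfrak{a}_{L/L^E}$ and $\mathfrak{c}_{L/L^E}$, where $L^E$ is the fixed field, and $L/L^E$ is again weakly ramified of odd degree. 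Because $G$ has odd order, each such $E$ has the shape $E\cong C\times Q$ with $C$ cyclic of order prime to $p$ and $Q$ a $p$-group; combined with the localization above, the conjecture becomes a statement about weakly ramified local extensions whose inertia groups carry exactly this cyclic-by-$p$ structure.

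Third, I would settle the resulting local identity by computing each invariant on the inertia group. At a wildly ramified place $w$ of $L$ the vanishing of the second ramification subgroup forces $I_w$ to be an extension of a cyclic tame quotient by a wild $p$-part on which the ramification filtration is as simple as possible. Here the refined version of Ullom's theorem enters: it determines the relevant Fitting-ideal, equivalently projective-module, invariant of $\mathcal{A}_{L/K}$ over $\mathbb{Z}_p[I_w]$ precisely enough to evaluate the unramified-characteristic homomorphism, and one then matches this value, character by character, against the Adams-twisted local Galois-Gauss sum using the explicit formulae for Gauss sums of weakly ramified local representations. Assembling the local contributions and invoking the Brauer detection of the previous step would yield $\delta=0$.

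The main obstacle is precisely the matching at the wild places once the relevant group is a genuine extension $C\times Q$ rather than a pure $p$-group: the prime-power case handled in the present paper is exactly the situation $C=1$. The second Adams operator twist does not interact cleanly with induction from inertia along this extension, so the difficulty is to control the action of the tame cyclic factor $C$ on the wild-inertia Galois-Gauss sums and to show that it cancels in the same way against the unramified-characteristic side. It is exactly this mixed tame-by-wild interaction that the refined Ullom input is designed to govern, and extending that input beyond the hypotheses under which it is presently available — so as to cover arbitrary cyclic-by-$p$ inertia groups — is what separates the prime-power evidence obtained here from a proof of the conjecture in full generality.
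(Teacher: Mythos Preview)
The statement you are attempting to prove is a \emph{conjecture}; the paper does not prove it and does not claim to. What the paper actually establishes (Theorem~\ref{Thm: p-group main intro}) is the far weaker assertion that, for weakly ramified extensions of odd $p$-power degree with $p$ unramified in $K$, both $\mathfrak{a}_{L/K}$ and $\mathfrak{c}_{L/K}$ are annihilated by $n(L/K)/p$. This bounds the order of the difference $\mathfrak{a}_{L/K}-\mathfrak{c}_{L/K}$; it does not show that it vanishes. So there is no ``paper's own proof'' to compare against, and your proposal should be read as a strategy toward an open problem, not as a proof.

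With that said, your outline has genuine gaps beyond the one you flag at the end. First, your description of how the refined Ullom result enters is not what that result does: Theorem~\ref{Prop: odd degree tor group order} gives an exponent bound on ${\rm DT}(\mathbb{Z}[\Gamma])$ for $p$-groups $\Gamma$ and controls the kernel of the diagonal coinflation map; it does not ``determine the Fitting-ideal invariant of $\mathcal{A}_{L/K}$'' or compute the unramified-characteristic homomorphism. In the paper this exponent bound is combined with Proposition~\ref{Th: BBH thm}(ii) (which already requires the restrictive hypotheses of abelian decomposition group, cyclic inertia, and absolutely unramified base) to force the difference into the kernel of enough coinflation maps that the exponent bound applies. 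Even then one only obtains annihilation by $|I_w|/p$, not vanishing. Second, your Brauer-detection step is not justified: you would need that the restriction maps to $p$-elementary subgroups are jointly injective on the torsion of $K_0(\mathbb{Z}[G],\mathbb{Q}[G])$, and that does not follow from the projection formula alone. Finally, the obstacle you identify in your last paragraph is not merely the mixed $C\times Q$ case: even when $C$ is trivial (the pure $p$-group case treated in this paper) the available techniques do not close the gap between ``bounded order'' and ``zero'', which is why Conjecture~\ref{bbh conj} remains open even under the hypotheses of Theorem~\ref{Thm: p-group main intro}.
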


This concrete conjecture provides the motivation for much of the work that we undertake in this article, and is itself interesting for several reasons. 

Firstly, we recall (from the introduction to \cite{BBH}) that the approach underlying the formulation of Conjecture \ref{bbh conj} provides the first link between the theory of $\mathcal{A}_{L/K}$ and  the very general framework of
Tamagawa number conjectures that originates with Bloch and Kato in \cite{bk} and was subsequently refined to the relevant `equivariant' setting by Burns and Flach in \cite{BF} (this link, though important, will however play no role in the results that we present in this article).

More concretely, we note that, since the element $\mathfrak{c}_{L/K}$ is both very elementary in nature and easy to compute explicitly, the above conjecture suggests that the link between the twisted Galois-Gauss sums of weakly ramified Artin characters of $G$ and the invariants of $\mathcal{A}_{L/K}$ that are incorporated into the definition of $\mathfrak{a}_{L/K}$ should (or, at least, could!) be much finer than has previously been observed in the work of Erez, of Erez and Taylor or of Vinatier.  

We recall that the strongest theoretical evidence in support of Conjecture \ref{bbh conj} that is currently available is the following result (taken from \cite[Cor. 8.4]{BBH}). 

\begin{prop}[{Bley, Burns and Hahn}]\label{Th: BBH thm}
Let $L/K$ be a weakly ramified finite Galois extension of number fields of odd degree. Then the following claims are valid. 

\begin{itemize}
\item[(i)] The element $ \mathfrak{a}_{L/K} - \mathfrak{c}_{L/K}$ belongs to $K_0(\ZZ[G],\QQ[G])_{\rm tor}$. 
		
\item[(ii)] Conjecture~\ref{bbh conj} is valid provided that every place $v$ of $K$ that is wildly ramified in $L$ has the following three properties:
\begin{itemize}
\item[(a)] the decomposition subgroup in $G$ of any place of $L$ above $v$ is abelian;
\item[(b)] the inertia subgroup in $G$ of any place of $L$ above $v$ is cyclic;
\item[(c)] the completion of $K$ at $v$ is absolutely unramified.
\end{itemize}
\end{itemize}
\end{prop}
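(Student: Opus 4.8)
The first assertion requires essentially no new argument. It was recalled above that $\mathfrak{a}_{L/K}$ lies in $K_0(\ZZ[G],\QQ[G])_{\rm tor}$ by \cite[Th. 5.2]{BBH}, while the explicit idelic definition of the twisted unramified characteristic shows directly that $\mathfrak{c}_{L/K}$ lies in the same group; since $K_0(\ZZ[G],\QQ[G])_{\rm tor}$ is a subgroup of $K_0(\ZZ[G],\QQ^c[G])$, so too does the difference $\mathfrak{a}_{L/K}-\mathfrak{c}_{L/K}$. The substance of the proposition is therefore claim (ii), and the plan for this runs as follows.

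The first step is to reduce the identity $\mathfrak{a}_{L/K}=\mathfrak{c}_{L/K}$ to a family of purely local statements. This uses the good functoriality of $\mathfrak{a}_{L/K}$ and $\mathfrak{c}_{L/K}$ under change of extension together with the localisation sequences of relative algebraic $K$-theory: the group $K_0(\ZZ[G],\QQ[G])$ decomposes as a direct sum, over the rational primes $\ell$, of the local groups $K_0(\ZZ_\ell[G],\QQ_\ell[G])$, and, by their constructions (one via the twisted Galois--Gauss sum, the other via the idelic twisted unramified characteristic), both $\mathfrak{a}_{L/K}$ and $\mathfrak{c}_{L/K}$ are assembled from contributions indexed by the places $v$ of $K$, the $v$-contribution lying in a semilocal piece governed by the decomposition group $G_w\le G$ of a place $w$ of $L$ above $v$. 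It then suffices to prove that these two $v$-local contributions agree for each $v$.

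For places $v$ that are at worst tamely ramified in $L$ this local identity is already available, being a reflection at the level of the finer invariants $\mathfrak{a}$ and $\mathfrak{c}$ of the classical theory and, in particular, of the Erez--Taylor description of the Hermitian-Galois structure of $\mathcal{A}_{L/K}$ via twisted Galois--Gauss sums in \cite{ET}. One is thus reduced to a wildly ramified place $v$, and this is where hypotheses (a), (b) and (c) enter. Because $G_w$ is abelian by (a), every irreducible character of $G_w$ is linear and the relevant local twisted Galois--Gauss sum can be evaluated one character at a time; and because the inertia subgroup is cyclic by (b) and the completion $K_v$ is absolutely unramified by (c), the abelian extension $L_w/K_v$ lies in a Lubin--Tate tower over its maximal unramified subextension (one may moreover take $p$ itself as the Lubin--Tate uniformiser). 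In this explicit situation one has at hand both a Dwork-style formula for the local Galois--Gauss sums twisted by the second Adams operator and an explicit $\ZZ_p[G_w]$-generator of the square root $\mathcal{A}_{L_w/K_v}$ of the inverse different, and I would compare these two data modulo the subgroup of the semilocal relative $K$-group that records unramified twists, thereby obtaining the required local equality.

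The crux, and the main obstacle, is this final comparison: one must control how the second Adams operator acts on the wildly ramified linear characters of $G_w$ and match the resulting Gauss-sum datum with an explicit normal-basis generator of $\mathcal{A}_{L_w/K_v}$. Hypotheses (a)--(c) are imposed precisely to make this feasible -- (a) reduces everything to linear characters, while (b) and (c) render the local reciprocity map, and with it both the Galois--Gauss sum and the module $\mathcal{A}_{L_w/K_v}$, explicitly computable by Lubin--Tate theory. Absent such restrictions the arithmetic of the twisted Galois--Gauss sums of wildly ramified characters remains, as the introduction emphasises, very poorly understood, which is exactly why Conjecture \ref{bbh conj} is open in general.
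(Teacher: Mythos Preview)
This proposition is not proved in the present paper: it is quoted from \cite[Cor.~8.4]{BBH}, and the paper confines itself to a two-sentence sketch immediately following the statement. So the relevant comparison is between your outline and that sketch.

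For claim (i) your argument is fine and in fact matches what the introduction already records: both $\mathfrak{a}_{L/K}$ and $\mathfrak{c}_{L/K}$ are known to lie in $K_0(\ZZ[G],\QQ[G])_{\rm tor}$, hence so does their difference. The paper's sketch phrases this slightly differently, saying that the proof in \cite{BBH} reduces (via functoriality and a result of Burns \cite{B}) to the tamely ramified case; but that reduction is precisely the content of \cite[Th.~5.2]{BBH}, which you cite.

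For claim (ii) your local-global reduction is correct and is exactly how the paper itself decomposes $\mathfrak{a}_{L/K}$ and $\mathfrak{c}_{L/K}$ (see \eqref{Eq: frak a decomp} and Definition~\ref{Def: loc c}(ii)). Two points of divergence are worth flagging. First, at tame places the relevant input is not the Erez--Taylor theorem in \cite{ET} (which concerns the class group) but the stronger vanishing $\mathfrak{a}_{L_w/K_v}=0$ in the relative $K$-group proved in \cite[Th.~8.1]{BBH}, together with the elementary observation $\mathfrak{c}_{L_w/K_v}=0$ for tame $L_w/K_v$ (see Remark~\ref{tame local remark} and Definition~\ref{Def: loc c}(i)). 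Second, and more substantively, at wild places the paper attributes the key computation not to a direct Lubin--Tate/Dwork argument but to the explicit self-dual normal basis generators constructed by Pickett and Vinatier in \cite{PV}; it is their work that supplies the concrete generator of $\mathcal{A}_{L_w/K_v}$ against which the twisted Gauss sums are compared. Your Lubin--Tate sketch is in the right spirit (and hypotheses (a)--(c) are indeed what make such explicit computations possible), but the actual technical input, as the paper emphasises, is \cite{PV} rather than a Dwork-style formula produced from scratch.
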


The proof of claim (i) of this result is reduced (by straightforward functoriality arguments, and a general result of Burns \cite{B} concerning $K_0(\ZZ[G],\QQ[G])_{\rm tor}$) to consideration of tamely ramified extensions. 

The proof of claim (ii), however,  depends heavily on detailed technical computations in certain families of wildly ramified extensions that are made by Pickett and Vinatier in \cite{PV}. For this reason, obtaining a full verification of Conjecture \ref{bbh conj} for classes of extensions that go beyond those in Proposition \ref{Th: BBH thm}(ii) seems to be a very difficult problem.

The main purpose of this paper is to provide further supporting evidence for Conjecture \ref{bbh conj} by proving the following result. 

\begin{thm}\label{Thm: p-group main intro} Fix an odd prime $p$ and a weakly ramified Galois extension of number fields $L/K$ of $p$-power degree, and set $G := \Gal(L/K)$. Suppose $p$ is unramified in $K$ and write $n(L/K)$ for the maximal order of a decomposition subgroup in $G$ of a wildly ramified place. Then, in $K_0(\ZZ[G],\QQ[G])$, one has 
\[ \frac{n(L/K)}{p} \cdot \mathfrak{a}_{L/K} = \frac{n(L/K)}{p} \cdot \mathfrak{c}_{L/K} = 0.\]
\end{thm}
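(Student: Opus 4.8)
The plan is to reduce the two asserted identities to purely local statements, one for each wildly ramified place, and then to resolve these by means of a sharp annihilation result for $p$-groups. Since $\mathfrak{a}_{L/K}$ and $\mathfrak{c}_{L/K}$ both lie in $K_0(\ZZ[G],\QQ[G])_{\rm tor}$, and by Proposition~\ref{Th: BBH thm}(i) so does their difference, it suffices to prove separately that $\frac{n(L/K)}{p}\cdot\mathfrak{c}_{L/K}=0$ and $\frac{n(L/K)}{p}\cdot\bigl(\mathfrak{a}_{L/K}-\mathfrak{c}_{L/K}\bigr)=0$; the identity for $\mathfrak{a}_{L/K}$ is then immediate.

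First I would decompose each of these torsion elements according to the places of $K$. Using the functoriality of $\mathfrak{a}_{L/K}$ and $\mathfrak{c}_{L/K}$ under change of extension established in \cite{BBH}, together with the explicit idelic description of $\mathfrak{c}_{L/K}$ and the description of $\mathcal{A}_{L/K}$ as a $\ZZ[G]$-module in terms of its completions, one expresses both $\mathfrak{c}_{L/K}$ and $\mathfrak{a}_{L/K}-\mathfrak{c}_{L/K}$ as finite sums, running over the places $v$ of $K$, of the images under the induction homomorphism $K_0(\ZZ[G_w],\QQ[G_w])\to K_0(\ZZ[G],\QQ[G])$ of local invariants attached to the completion $L_w/K_v$, where $G_w$ denotes the decomposition group. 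At a place that is unramified, or tamely ramified, in $L/K$ these local invariants vanish, by the tame theory of Erez and Taylor \cite{ET} (and the hypothesis that $p$ is unramified in $K$); here one also uses that, since $G$ is a $p$-group, a ramified place of residue characteristic different from $p$ is tamely ramified while a ramified place of residue characteristic $p$ is wildly ramified. Hence only the wildly ramified places $v$ contribute, and for such $v$ the group $G_w$ is a $p$-group whose order divides $n(L/K)$, while the completion $K_v$ is absolutely unramified.

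It then remains to prove the local statement that, for each wildly ramified place $v$, the local invariants $\mathfrak{c}_{L_w/K_v}$ and $\mathfrak{a}_{L_w/K_v}-\mathfrak{c}_{L_w/K_v}$ --- which lie in the torsion subgroup of $K_0(\ZZ[G_w],\QQ[G_w])$ --- are annihilated by $|G_w|/p$. Granting this, their images under induction are annihilated by $|G_w|/p$ (induction is a homomorphism, so does not increase the order of a torsion class), hence by $n(L/K)/p$, and summing over $v$ yields both required identities. When $G_w$ is abelian with cyclic inertia subgroup this local annihilation is essentially contained in Proposition~\ref{Th: BBH thm}(ii). In general I would deduce it from a refinement of a theorem of Ullom concerning the structure, for a $p$-group $H$, of the part of the locally free class group of $\ZZ[H]$ --- equivalently of $K_0(\ZZ[H],\QQ[H])_{\rm tor}$ --- in which the relevant invariants are constrained to lie: the refinement improves the annihilator from the naive bound $|H|$ to $|H|/p$, the gain of a single factor of $p$ reflecting the existence of $\ZZ/p$ as a quotient of $H$ and the triviality of the corresponding invariant there.

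I expect the main obstacle to be precisely this last step: formulating and proving the correct refinement of Ullom's theorem --- the gain of the single factor $p$ --- and verifying that the local invariants $\mathfrak{a}_{L_w/K_v}$ and $\mathfrak{c}_{L_w/K_v}$ genuinely lie in the subgroup to which that refined bound applies. A secondary difficulty, and the reason for routing the argument through this structural input rather than through explicit local computations of the kind carried out by Pickett and Vinatier \cite{PV}, is that one must now treat decomposition groups $G_w$ that are non-abelian and inertia subgroups that are non-cyclic, so that the argument must be uniform in $G_w$.
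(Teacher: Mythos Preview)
Your overall architecture---local decomposition followed by a refined Ullom-type annihilation---matches the paper's, but there is a genuine gap at the crucial point: you propose to apply the refined Ullom bound directly to the decomposition group $G_w$, and this does not work when $G_w$ is non-abelian.

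The refined Ullom statement the paper proves (Theorem~\ref{Prop: odd degree tor group order}(ii)) is not that ${\rm DT}(\ZZ[\Gamma])$ has exponent $|\Gamma|/p$; rather, it says that the \emph{kernel} of the diagonal projection ${\rm DT}(\ZZ[\Gamma])\to\bigoplus_{\Delta\in\Xi(\Gamma)}{\rm DT}(\ZZ[\Gamma/\Delta])$ has exponent dividing $|\Gamma|/p$, where $\Xi(\Gamma)$ is the set of kernels of irreducible characters of $\Gamma$. To exploit this, you must verify that your element maps to zero in every ${\rm DT}(\ZZ[\Gamma/\Delta])$, which by functoriality amounts to knowing Conjecture~\ref{bbh conj} for each quotient extension. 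If $\Gamma=G_w$ is non-abelian, two things can go wrong: the quotients $G_w/\Delta$ may themselves be non-abelian (so Proposition~\ref{Th: BBH thm}(ii) does not apply to them), and $G_w$ may admit a faithful irreducible character, in which case $\Xi(G_w)$ contains the trivial subgroup and the kernel condition is vacuous. Your remark that ``the gain of a single factor of $p$ reflects the existence of $\ZZ/p$ as a quotient'' understates what is needed: vanishing on a single $\ZZ/p$ quotient is not enough.

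The paper resolves this by inserting an intermediate reduction from $G_w$ to the inertia subgroup $I_w$. First one shows (Lemma~\ref{easy part} and Proposition~\ref{key prop ?}) that $|G_w/I_w|\cdot\mathfrak{c}_{L_w/K_v}=0$ and $|G_w/I_w|\cdot\mathfrak{a}_{L_w/K_v}={\rm i}^{G_w,*}_{I_w}(\mathfrak{a}_{L_w/B_w})$, where $B_w=L_w^{I_w}$. The point is that, because $L_w/K_v$ is weakly ramified of $p$-power degree, the wild inertia $I_w=\Gamma_1$ is \emph{abelian} of exponent $p$ (Lemma~\ref{Prop: local abelian inertia p-group}). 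For an abelian $p$-group the quotients $I_w/\Delta$ with $\Delta\in\Xi(I_w)$ are exactly the cyclic quotients, so Proposition~\ref{Th: BBH thm}(ii) applies to each of them (after a global realisation of the totally ramified extension $L_w/B_w$ via Neukirch's theorem, needed because Proposition~\ref{Th: BBH thm}(ii) is stated globally); this places $\mathfrak{a}_{L_w/B_w}$ in the kernel of the diagonal projection for $I_w$, and then Theorem~\ref{Prop: odd degree tor group order}(ii) gives annihilation by $|I_w|/p$. Combining with the factor $|G_w/I_w|$ already gained yields the asserted bound $|G_w|/p$. The step you are missing is precisely this passage to the abelian inertia group; without it the ``uniform in $G_w$'' argument you anticipate cannot be completed.
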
 

This result is of interest since there are many weakly ramified Galois extensions $L/K$ of odd prime-power degree in which the decomposition subgroup of each wildly ramified place is non-abelian (we recall, for example, the explicit extensions constructed by Vinatier in \cite{V-N}). 

Theorem \ref{Thm: p-group main intro} will be proved by combining (both parts of) Proposition \ref{Th: BBH thm} with a refined version, proved in Theorem \ref{Prop: odd degree tor group order}, of a well-known result of Ullom \cite{ullom0} concerning the structure of the kernel subgroup of ${\rm Cl}(\ZZ[G])$ in the case that $G$ has prime-power order. The latter result itself complements recent work of Bley and Wilson \cite{BW} concerning the explicit computation of relative algebraic $K$-groups and so is, we hope, of some independent interest. 

We next recall that, if $L/K$ is tamely ramified (and of odd degree), then in \cite[Th. 3]{E} Erez used the techniques of classical Galois module theory to prove that $[\mathcal{A}_{L/K}]$ vanishes and hence, by Jacobinski's Cancellation Theorem (which is valid since $G$ has odd order, see \cite[Th. (51.3) and Th. (51.24)]{CR2}), that $\mathcal{A}_{L/K}$ is a free $\ZZ[G]$-module. 

Motivated by this result, and several other factors (including extensive numerical computations), Vinatier was then led to make the  following conjecture (cf. \cite[Conj.]{V} and \cite[\S 1.2]{CV}). 

\begin{conjecture}[{Vinatier}] \label{C:sv}
If $L/K$ is a weakly ramified Galois extension of number fields of odd degree, then $\mathcal{A}_{L/K}$ is a free $\ZZ[G]$-module (or equivalently, the class $[\mathcal{A}_{L/K}]$ vanishes). 
\end{conjecture}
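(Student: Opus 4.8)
The plan is to reduce Vinatier's conjecture to the vanishing of a single class in the reduced projective class group and then to attack that class by localising it at the ramified places. \emph{First}, since $G$ has odd order, Jacobinski's Cancellation Theorem (as recalled in the statement) shows that $\mathcal{A}_{L/K}$ is free over $\ZZ[G]$ if and only if its stable class $[\mathcal{A}_{L/K}] = \partial_G(\mathfrak{a}_{L/K})$ vanishes in $\Cl(\ZZ[G])$, so I would work throughout with this class. I would then represent it in Fr\"ohlich's Hom-description, realising $\Cl(\ZZ[G])$ as a quotient of $\Hom_{\Gal(\QQ^c/K)}(R_G,\mathfrak{J}(\QQ^c))$ (with $R_G$ the group of virtual characters of $G$) and choosing a generator of $\mathcal{A}_{L/K}$ over a maximal order; forming its second Adams-twisted resolvents, and using the self-duality of $\mathcal{A}_{L/K}$ together with the operator $\psi^2$, the representing homomorphism becomes, by the formalism of Erez--Taylor and of \cite{BBH}, a product of local terms whose ramified part is governed by the twisted Galois-Gauss sums $\tau(\psi^2\chi)$.

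\emph{Second}, I would dispose of the unramified and tamely ramified places. At an unramified place local freeness is automatic, while at a tame place the local contribution can be absorbed into the denominator subgroup of the Hom-description by invoking Erez's theorem (\cite[Th.~3]{E}) that the class of $\mathcal{A}_{L/K}$ already vanishes for tame extensions of odd degree. Thus the entire obstruction is concentrated at the wildly (but weakly) ramified places.

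\emph{The main obstacle} is therefore the analysis at a wildly ramified place $v$: one must show that the ratio of the local twisted Galois-Gauss sum to the associated local resolvent lies in the unit (determinantal) part of the Hom-description, so that it contributes trivially to the class. This requires exact control of both the valuation and the unit component of wild twisted Galois-Gauss sums for an \emph{arbitrary} decomposition group, and it is precisely here that no general technique exists: the computations of Vinatier and of Pickett--Vinatier \cite{PV} succeed only under severe restrictions (cyclic inertia, abelian decomposition group, absolutely unramified base), which is exactly why Proposition~\ref{Th: BBH thm}(ii) imposes those hypotheses. I expect this local wild analysis to be the decisive difficulty, and no route to it in full generality is visible with current methods.

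A complementary strategy, closer to the machinery of this paper, is to deduce Vinatier's conjecture from the strictly stronger Conjecture~\ref{bbh conj}: granting $\mathfrak{a}_{L/K} = \mathfrak{c}_{L/K}$ and verifying the elementary identity $\partial_G(\mathfrak{c}_{L/K}) = 0$ (which one expects to reduce again to Erez's odd-degree tame result), one would obtain $[\mathcal{A}_{L/K}] = \partial_G(\mathfrak{a}_{L/K}) = \partial_G(\mathfrak{c}_{L/K}) = 0$. This merely relocates the difficulty into Conjecture~\ref{bbh conj}, whose proof meets the same wild Galois-Gauss sum obstruction. Finally, in the $p$-power degree setting, applying $\partial_G$ to Theorem~\ref{Thm: p-group main intro} already gives $\tfrac{n(L/K)}{p}\cdot[\mathcal{A}_{L/K}] = 0$, which proves Vinatier's conjecture outright whenever $n(L/K)=p$; the general case amounts to removing the residual power of $p$, and this again returns one to the wild local analysis above.
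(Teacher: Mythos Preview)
The statement you were asked to address is not a theorem but an open conjecture; the paper states it as Vinatier's conjecture and supplies no proof, so there is no argument in the paper to compare against. Your proposal is, appropriately, not a proof either: it is a sketch of the classical Fr\"ohlich--Erez--Taylor strategy together with an accurate identification of the point at which it stalls, namely the wild local analysis of twisted Galois-Gauss sums for general decomposition groups. That diagnosis matches the paper's own discussion around Proposition~\ref{Th: BBH thm}(ii) and Remark~\ref{fin remark}.

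Two small corrections to your sketch. First, the vanishing $\partial_G(\mathfrak{c}_{L/K}) = 0$ is more elementary than you suggest and does not need Erez's tame theorem: by Definition~\ref{Def: loc c} each local term $\mathfrak{c}_{L_w/K_v}$ is of the form $\delta_{\ZZ_\ell,\QQ_\ell,G_w}(\cdots)$, hence lies in the image of $\partial^1$ and so in the kernel of $\partial^0$, and induction preserves this. Second, your closing remark that Theorem~\ref{Thm: p-group main intro} gives $\tfrac{n(L/K)}{p}\cdot[\mathcal{A}_{L/K}] = 0$ is correct, but the paper itself observes (see the final Corollary and Remark~\ref{fin remark}) that in ${\rm Cl}(\ZZ[G])$ one can do strictly better via Vinatier's methods, obtaining the bound $e(w)$ (and $e(w)/p$ when $L_w/\QQ_p$ is totally ramified); the weaker bound from Theorem~\ref{Thm: p-group main intro} is the price of working in the finer relative $K$-group.
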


As evidence for this conjecture, Vinatier has proved explicit upper bounds on the order of the class $[\mathcal{A}_{L/K}]$ in the case that $K = \QQ$ and $G$ is of prime-power order (for details see the articles \cite{V} and \cite{V2}). 

If one projects the result of Theorem \ref{Thm: p-group main intro} into ${\rm Cl}(\ZZ[G])$, then one obtains a bound on the order of $[\mathcal{A}_{L/K}]$ that is weaker than the corresponding bounds established by Vinatier. The reason for this is that a key step in the approach of Vinatier cannot, as far as we can see, be repeated in the setting of relative algebraic $K$-groups (see Remark \ref{fin remark}). 

Nevertheless, we are able to show that the approach developed by Vinatier in \cite{V} can be used to obtain concrete new information about the relative element $\mathfrak{a}_{L/K}$ of Bley, Burns and Hahn. 

This result is proved as Theorem \ref{vinatier lift} and, whilst it does not (as far as we can currently see) provide more evidence in support of Conjecture~\ref{bbh conj} than is given by Theorem \ref{Thm: p-group main intro}, it does certainly provide further evidence for the belief that the link between $\mathcal{A}_{L/K}$ and Galois-Gauss sums should be closer than has so far been observed.   

\vskip 0.2truein \noindent{}{\bf Acknowledgements} 
I would like to thank David Burns for suggesting the problem and many inspiring discussions. I am very grateful to St\'ephane Vinatier for helpful, and encouraging, correspondence. Finally, I am very grateful to the anonymous referee for their careful review and valuable feedback, particularly, for having pointed out an error in an earlier version of the proof of Theorem \ref{Prop: odd degree tor group order} and for motivating me to obtain a sharper bound in the class group that is consistent with Vinatier's result.
\vskip 0.2truein

\section{Preliminaries}
\subsection{Notations}
For a unital ring $A$, we write $A^\times$ for the multiplicative group of invertible elements of $A$ and $\zeta(A)$ for the centre of $A$.

Fix a finite group $\Gamma$, and a Dedekind domain $R$ of characteristic zero, with field of fractions $F$, and $E$ is an extension field of $F$. We let $\mathcal{O}_{E}$ denote the integral closure of $R$ in $E$. For any $R[\Gamma]$-module $M$, we write $M_{E}:=E\otimes_{R} M$, and $M_{v}:=R_{v}\otimes_{R}M$, with $R_{v}$ the completion of $R$ at $v$, where $v$ is a (non-zero) prime ideal of $R$. 

Assume $E/F$ is a finite Galois extension of fields. We let ${\rm Gal}(E/F)$ denote the Galois group of $E/F$, and we write the action of ${\rm Gal}(E/F)$ on $E$ by $x \mapsto g(x)$ for $x \in E$ and $g\in {\rm Gal}(E/F)$. We fix a separable closure $F^c$ of $F$, and write $\Omega_{F}$ for the absolute Galois group ${\rm Gal}(F^c/F)$ of $F$. 
For convenience, we take $\mathbb{Q}^c$ to be the algebraic closure of $\mathbb{Q}$ in $\mathbb{C}$. 

For a finite group $\Gamma$, we write $\widehat{\Gamma}$ for the set of $\QQ^c$-valued irreducible characters of $\Gamma$ and $R_\Gamma$ for the additive group generated by $\widehat{\Gamma}$. In particular, for each $\chi$ in $\widehat{\Gamma}$, we obtain a primitive idempotent of the centre $\zeta(\QQ^c[\Gamma])$ of $\QQ^c[\Gamma]$ by setting 
$e_\chi =\frac{\chi(1)}{|\Gamma|} \sum_{g\in \Gamma}\chi(g^{-1})g$ and so each element of $\zeta(\QQ^c[\Gamma])$ (respectively $\zeta(\QQ^c[\Gamma])^\times$) can be written uniquely in the form
\begin{equation}\label{Eq: element in centre}
x= \sum_{\chi\in \widehat{\Gamma}}e_{\chi}\cdot x_{\chi} , \quad 
\text{ with $x_\chi\in \QQ^c$, respectively $x_\chi \in \QQ^{c\times}$, for all $\chi$}.
\end{equation}

\subsection{Algebraic $K$-theory}
In this section, we recall the definitions of relevant algebraic $K$-groups and the long exact sequences. Detailed descriptions of these definitions and results are given by Swan in \cite[Chap. 13 and 15]{swan} and also by Curtis and Reiner in \cite[Chap. 5 and 6]{CR2}.

We let $\mathcal{P}(E[\Gamma])$ denote the category of finitely generated projective $E[\Gamma]$-modules and ${\rm Aut}_{E[\Gamma]}(M)$ for the group of $E[\Gamma]$-module automorphisms of $M$. 

\subsubsection{Whitehead groups}
We write $K_1(E[\Gamma])$ for the Whitehead group of the ring $E[\Gamma]$. In particular, we recall that each element of $K_1(E[\Gamma])$ can be represented by a pair $[P, f]$ where $P\in \mathcal{P}(E[\Gamma])$ and $f\in \mathrm{Aut}_{E[\Gamma]}(P)$. (We refer the reader to \cite[\S38B]{CR2} for a detailed discussion of this group in terms of generators and relations.)

Suppose $E$ is either a number field or a $p$-adic field (for some prime $p$). Then, for some finite index set $I$, there is a Wedderburn decomposition  $E[\Gamma] = \prod_{i \in I} M_{n_i}(A_i)$ and each matrix ring $M_{n_i}(A_i)$ is a simple central algebra over the centre $\zeta(A_i)$ of $A_i$.  
%\cite[p.141(157)]{CR}
In this case, one can compute in $K_1(E[\Gamma])$ by using the `reduced norm' map discussed by Curtis and Reiner in \cite[45A]{CR2} 
\begin{multline}\label{Eq: red norm Nrd def}
\mathrm{Nrd}_{E[\Gamma]}: K_{1}(E[\Gamma]) = \bigoplus_{i \in I}K_1(M_{n_i}(A_i))\cong  \bigoplus_{i \in I}K_1(A_i) \xrightarrow{({\rm Nrd}_{A_i})_i} \bigoplus_{i}\zeta(A_i)^\times = \zeta (E[\Gamma]) ^{\times}, 
\end{multline}
in which the isomorphism follows from the Morita equivalence (applied to each ring $M_{n_i}(A_i)$).

\subsubsection{The relative $K$-group}
We write $K_{0}(R[\Gamma], E[\Gamma])$ for the relative algebraic $K$-group associated to $R\subset E$ and recall that each element of this group is represented by a triple $[P, \phi , Q]$ where $P$, $Q \in \mathcal{P}(R[\Gamma])$ and $\phi \in {\rm Is}_{E[\Gamma]}(P_{E} , Q_{E})$, the set of $E[\Gamma]$-module isomorphisms from $P_E$ to $Q_E$. (For an explicit description of this group in terms of the generators and relations, see \cite[pp. 215]{swan}.)

Then there is a canonical decomposition
\begin{equation}\label{Eq: KT iso K0}
K_{0}(R[\Gamma], F[\Gamma])\xrightarrow{\sim} \bigoplus_{v}K_{0}(R_{v}[\Gamma], F_{v}[\Gamma]),
\end{equation}
where $v$ runs over all non-Archimedean places of $F$. This isomorphism is induced by the diagonal localisation homomorphism $(\pi_{\Gamma, v})_v$ (see, for example, discussion below \cite[(49.12)]{CR2}), where for each non-zero prime ideal $v$ of $R$, we write 
\begin{equation}\label{K_0 proj K_0_p}
\pi_{\Gamma, v}:  K_{0}(R[\Gamma], F[\Gamma]) \rightarrow K_{0}(R_{v}[\Gamma], F_{v}[\Gamma])
\end{equation}
for the homomorphism that sends the class of $[P, \phi, Q]$ to the class of $[P_{v}, F_{v}\otimes_{F}\phi, Q_{v}]$.

\subsubsection{The long exact sequence of relative $K$-theory}
Finally, we let ${\rm Cl}(R[\Gamma])$ denote the reduced projective class group of $R[\Gamma]$ (as discussed in \cite[\S49A]{CR2}) and recall from \cite[Th. 15.5]{swan} that there exists a commutative diagram 
\begin{equation} \label{Diagram: Kseq}
\begin{CD} 
K_1(R[\Gamma]) @> >> K_1(E[\Gamma]) @> \partial^1_{R,E,\Gamma} >> K_0(R[\Gamma],E[\Gamma]) @> \partial^0_{R,E,\Gamma} >> {\rm Cl}(R[\Gamma])\\
@\vert @A\iota_1 AA @A\iota_2 AA @\vert\\
K_1(R[\Gamma]) @> >> K_1(F[\Gamma]) @> \partial^1_{R,F,\Gamma}  >> K_0(R[\Gamma],F[\Gamma]) @> \partial^0_{R,F,\Gamma}  >> {\rm Cl}(R[\Gamma]),
\end{CD}
\end{equation}
in which the rows are the respective long exact sequences of relative $K$-theory. 

In the diagram, we have used the following notation (with the morphisms in the second row being completely analogous to those in the first): $\iota_{1}$ and $\iota_{2}$ are the natural scalar extension morphisms (these maps are injective and will usually be regarded as inclusions); 
the homomorphism $\partial^{1}_{R,E,\Gamma}$ sends each pair $[E[\Gamma]^{n}, \phi]$, with $\phi$ in $\mathrm{Aut}_{E[\Gamma]}(E[\Gamma]^{n})$, to the class of $[R[\Gamma]^{n}, f_\phi, R[\Gamma]^{n}]$, where $f_{\phi}$ is given by $(R[\Gamma]^n)_{E} \xrightarrow{\cong} E[\Gamma]^n \xrightarrow{\phi} E[\Gamma]^n \xrightarrow{\cong} (R[\Gamma]^n)_E$; 
for each pair $P$ and $Q$ in $\mathcal{P}(R[\Gamma])$ and each isomorphism of $E[\Gamma]$-modules $\phi: P_{E}\cong Q_{E}$, the homomorphism $\partial^{0}_{R,E,\Gamma}$ sends the class of $[P, \phi , Q]$ to the difference $[P]-[Q]$.

\subsubsection{The reduced norm map}
In this article, we shall always consider the cases that $\Gamma$ is a finite Galois group of odd order and $E$  is either a number field or a $p$-adic field (for some prime $p$). Therefore, by the Hasse-Schilling-Maass Norm Theorem (cf. \cite[Th. (7.48)]{CR1}) and \cite[Th. (7.45)]{CR1}, the reduced norm map ${\rm Nrd}_{E[\Gamma]}$ defined in \eqref{Eq: red norm Nrd def} is bijective and hence, we obtain a homomorphism by setting
\[\delta_{R, E, \Gamma}:=\partial^{1}_{R,E,\Gamma} \circ ({\rm Nrd}_{E[\Gamma]})^{-1}: \zeta(E[\Gamma])^{\times} \to K_0(R[\Gamma], E[\Gamma]). \]

Now for each subgroup $\Delta \subseteq \Gamma$, we write ${\rm i}^{\Gamma, *}_{\Delta, E} $ for the homomorphism of relative $K$-groups induced by the induction functor ${\rm i}^{\Gamma}_\Delta$ of $\mathcal{P}(R[\Delta]) \rightarrow \mathcal{P}(R[\Gamma])$ (via applying $R[\Gamma] \otimes_{R[\Delta]}$)
\begin{align}\label{Eq: K_0 ind def}
{\rm i}^{\Gamma, *}_{\Delta, E} : K_{0}(R[\Delta], E[\Delta])  \, & \rightarrow  K_{0}(R[\Gamma], E[\Gamma]),\\
[P, \phi, Q] \, & \mapsto  [{\rm i}^{\Gamma}_\Delta P, {\rm i}^{\Gamma}_\Delta \phi, {\rm i}^{\Gamma}_\Delta Q].\nonumber
\end{align}
We also let ${\rm res}^\Gamma_\Delta: R_{\Gamma} \rightarrow R_{\Delta}$ denote the restriction homomorphism and define a map $\mathrm{\tilde{i}}_{\Delta}^{\Gamma}: \zeta(E^c[\Delta])^{\times} \rightarrow \zeta(E^c[\Gamma])^{\times}$ by setting, for each $x \in \zeta(E^c[\Delta])^{\times}$ and $\chi \in \widehat{\Gamma}$ (in terms of \eqref{Eq: element in centre}),
\begin{equation}\label{Eq: center ind def}
\mathrm{\tilde{i}}_{\Delta}^{\Gamma} (x)_{\chi}= \prod_{\varphi \in \widehat{\Delta}} x_{\varphi}^{<\mathrm{res}^{\Gamma}_{\Delta} \chi , \varphi >_{\Delta}}. 
\end{equation} 
And we recall from \cite[pp. 581]{BB} that
\begin{equation}\label{Diagram: ind}
\mathrm{i}_{\Delta, E}^{\Gamma, *} \circ  \delta_{R, E, \Delta} =  \delta_{R, E, \Gamma}  \circ \mathrm{\tilde{i}}_{\Delta}^{\Gamma}.
\end{equation}

\subsection{Equivariant Galois-Gauss sums}\label{S: ggs}
Suppose $E/F$ is a finite Galois extension of $\ell$-adic fields and set $\Gamma:=\mathrm{Gal}(E/F)$. In terms of \eqref{Eq: element in centre}, we define the following `equivariant' elements in $\zeta(\mathbb{Q}^{c}[\Gamma])^\times$ as in \cite[\S4A]{BBH}.

\begin{definition}\label{Def: equiv global GGS}\
\begin{itemize}
\item[(i)] The `equivariant Galois-Gauss sum' of $E/F$ is the element $\tau_{E/F}\in \zeta(\mathbb{Q}^{c}[\Gamma])^\times$ defined by setting
\[\tau_{E/F}:=\sum_{\chi \in \widehat{\Gamma}} e_\chi \cdot \tau(F,\chi) .\]
where $\tau(F,\chi)$ is the (local) Galois Gauss sum discussed in \cite[Chap. I, \S 5]{F83}. 
\item[(ii)] For each $\phi \in \widehat{\Gamma}$, the `unramified characteristic' of $\phi$ is defined by setting (as in \cite[Chap. IV, \S1, (1.1)]{F83})
\[y(F, \phi) = \left\{ \begin{array}{ll}
1, & \quad \mbox{ if }\phi \mbox{ is ramified};\\
(-1)^{\phi(1)}\phi(\sigma), & \quad \mbox{ if }\phi \mbox{ is unramified},\end{array} \right. \]
where $\sigma$ is the Frobenius element in $\Gamma/\Gamma_{0}$ lifted to $\Gamma$ and $\Gamma_0$ is the inertia subgroup of $\Gamma$.
		
\item[(iii)] The `equivariant unramified characteristic' of $E/F$ is the element of $\zeta(\mathbb{Q}[\Gamma])^\times$ obtained by  setting
\[ y_{E/F}:=\sum_{\chi \in \widehat{\Gamma}} e_\chi \cdot y(F, \chi).  \]
\item[(iv)] The `modified equivariant Galois-Gauss sum' of $E/F$ is the element
\[ \tau'_{E/F}:=\tau_{E/F}\cdot y_{E/F}^{-1}\in \zeta (\mathbb{Q}^{c}[\Gamma])^\times.\]
\item[(v)] We set $\tau_F := \tau(\mathbb{Q}_\ell, \mathrm{ind}^{\mathbb{Q}_\ell}_{F} \mathbf{1}_{F})$ and define the `induced discriminant' of $E/F$ to be the element  $\tau^{\Gamma}_{F}$ of $\zeta(\mathbb{Q}^{c}[\Gamma])^{\times}$ with 
$$(\tau_{F}^{\Gamma})_{\chi}= \tau_F^{\chi(1)}$$
for all $\chi\in \widehat{\Gamma}$.
\end{itemize}	
\end{definition}

Last, for each natural number $k$ and each $\chi\in R_{\Gamma}$, we write $\psi_k$ for the $k$-th Adams operator on $R_\Gamma$ (cf. \cite[Prop. (12.8)]{CR1}). Then for each pair of integers $m$ and $n$, and each natural number $k$, we define an endomorphism $m + n\cdot \psi_{k,\ast}$ of $\zeta (\mathbb{Q}^{c}[\Gamma])^\times$ (in terms of \eqref{Eq: element in centre}) such that for each element $x$ of $\zeta (\mathbb{Q}^{c}[\Gamma])^\times$, the element $(m+n\cdot \psi_{k,*})(x)$ is uniquely specified by $((m+n\cdot \psi_{k,*})(x))_{\chi}:= (x_{\chi})^{m}\cdot (x_{\psi_{k}(\chi)})^{n}$ for each irreducible character $\chi$ in $\widehat{\Gamma}$.

\section{The torsion subgroup of $K_0(\ZZ[\Gamma],\QQ[\Gamma])$}\label{new refined kernel results}
Fix a finite group $\Gamma$ and let $R$ denote either $\ZZ$ or $\ZZ_\ell$ for a prime $\ell$, and $Q$ denote the corresponding fields $\QQ$ or $\QQ_\ell$. In this section we study the torsion subgroup
\[ {\rm DT}(R[\Gamma]) := K_{0}(R[\Gamma], Q[\Gamma])_{\rm tor}\]
of $K_{0}(R[\Gamma], Q[\Gamma])$. 

We recall that these groups have already been studied in the article \cite{BW} of Bley and Wilson that contains both general results and numerical algorithms.

To start, we first note that the isomorphism (\ref{Eq: KT iso K0}) restricts to give a canonical direct sum decomposition 
\begin{equation}\label{direct sum decomp DT}{\rm DT}(\ZZ[\Gamma]) \cong \bigoplus_{\ell} {\rm DT}(\ZZ_\ell[\Gamma]) \end{equation}
over all primes $\ell$. 

Next, for each prime $\ell$, we fix a maximal order $\mathcal{M}_\ell$ in $\QQ_\ell[\Gamma]$ that contains $\ZZ_\ell[\Gamma]$ and we note that $\calM_\ell$ is hereditary (by \cite[Th. (26.12)]{CR1}). Then the results of \cite[Th. 2.2(iv) and 2.4(i)]{BW} imply that 
\begin{equation}\label{tor}
{\rm DT}(\mathbb{Z}_{\ell}[\Gamma]) \cong  \frac{\mathrm{Nrd}_{\mathbb{Q}_{\ell}[\Gamma]}(\mathcal{M}_{\ell}^{\times})}{\mathrm{Nrd}_{\mathbb{Q}_{\ell}[\Gamma]}(\mathbb{Z}_{\ell}[\Gamma]^{\times})}.
\end{equation}
In the sequel, unless there is danger of confusion, we shall abbreviate $\mathrm{Nrd}_{\mathbb{Q}_{\ell}[\Gamma]}$ to $\mathrm{Nrd}$. This isomorphism then combines with the direct sum decomposition (\ref{direct sum decomp DT}) to give an isomorphism  
\begin{equation}\label{useful decomp}{\rm DT}(\mathbb{Z}[\Gamma])\cong \prod_{\ell} \frac{\mathrm{Nrd}(\mathcal{M}_{\ell}^{\times})}{\mathrm{Nrd}(\mathbb{Z}_{\ell}[\Gamma]^{\times})} = \prod_{\ell\in \Sigma(\Gamma)} \frac{\mathrm{Nrd}(\mathcal{M}_{\ell}^{\times})}{\mathrm{Nrd}(\mathbb{Z}_{\ell}[\Gamma]^{\times})}. \end{equation}
where $\Sigma(\Gamma)$ denotes the finite set of primes that divide the order of $\Gamma$ (or equivalently, at which $\mathcal{M}_{\ell}\neq \mathbb{Z}_{\ell}[\Gamma]$). This isomorphism implies that ${\rm DT}(\mathbb{Z}[\Gamma])$ is finite (see \cite[Cor. 2.5]{BW}). 

In the next result we prove finer results for ${\rm DT}(\mathbb{Z}[\Gamma])$ in the case that $\Gamma$ has prime power order. 
Claim (i) of this result is a version for relative $K$-groups of a well-known bound on the exponent of the kernel group $D(\ZZ[\Gamma])$ that is due to Ullom \cite{ullom0} (see also \cite[Th. (50.19)]{CR2}).

\begin{thm}\label{Prop: odd degree tor group order} 
Let $p$ be an odd prime and $\Gamma$ be a non-trivial group of order $p^n$ for a natural number $n$. Then, the following claims are valid.
\begin{itemize}
\item[(i)] The exponent of ${\rm DT}(\mathbb{Z}[\Gamma])$ is divisible by $p-1$ and divides $(1-1/p)|\Gamma|$.
\item[(ii)] Write $\Xi(\Gamma)$ for the set of normal subgroups of $\Gamma$ that are given by the kernels of the irreducible characters of $\Gamma$. Then, the exponent of the kernel of the diagonal projection map
\[ {\rm DT}(\mathbb{Z}[\Gamma]) \to \bigoplus_{\Delta \in \Xi(\Gamma)}{\rm DT}(\mathbb{Z}[\Gamma/\Delta])\]
divides $|\Gamma|/p$.
\item[(iii)] Claims (i) and (ii) remain valid if one replaces $\ZZ$ by $\ZZ_p$.
\end{itemize}
\end{thm}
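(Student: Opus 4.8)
The plan is to work with the explicit description \eqref{useful decomp} of ${\rm DT}(\mathbb{Z}[\Gamma])$ and, since $\Gamma$ has $p$-power order, with the single factor at $\ell = p$, so that throughout $\mathcal{M} := \mathcal{M}_p$ is a maximal $\mathbb{Z}_p$-order in $\mathbb{Q}_p[\Gamma]$ containing $\mathbb{Z}_p[\Gamma]$ and ${\rm DT}(\mathbb{Z}[\Gamma]) \cong {\rm DT}(\mathbb{Z}_p[\Gamma]) \cong \mathrm{Nrd}(\mathcal{M}^\times)/\mathrm{Nrd}(\mathbb{Z}_p[\Gamma]^\times)$; this already reduces (iii) to (i) and (ii). For (i), I would first pin down $\mathrm{Nrd}(\mathcal{M}^\times)$: under the Wedderburn decomposition $\mathbb{Q}_p[\Gamma] = \prod_i A_i$ with each $A_i$ a matrix algebra over a field $F_i$ (by Roquette's theorem there are no genuine skew fields for $p$ odd, or one can cite \cite[Th. (7.45)]{CR1} as the paper does elsewhere, since $\Gamma$ has odd order the reduced norm is surjective), $\mathrm{Nrd}(\mathcal{M}^\times) = \prod_i \mathcal{O}_{F_i}^\times$, where $\mathcal{O}_{F_i}$ is the ring of integers of $F_i$; each $F_i = \mathbb{Q}_p(\chi_i)$ is a subfield of $\mathbb{Q}_p(\zeta_{p^{m_i}})$ for some $m_i \le n$. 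The key computation is then to bound the exponent of $\prod_i \mathcal{O}_{F_i}^\times / \mathrm{Nrd}(\mathbb{Z}_p[\Gamma]^\times)$. The upper bound $(1-1/p)|\Gamma|$ should come from the fact that $\mathrm{Nrd}(\mathbb{Z}_p[\Gamma]^\times)$ contains the image of the units $\mathbb{Z}_p[\Gamma]^\times$, hence contains (via the augmentation-type idempotents and a standard Higman/Fröhlich argument à la Ullom) enough units so that the quotient is killed by the exponent of $\Gamma$ times the ramification contribution; concretely one shows $(\mathcal{O}_{F_i}^\times)^{(1-1/p)p^{m_i}} \subseteq \mathrm{Nrd}(\mathbb{Z}_p[\Gamma]^\times)$ by exhibiting suitable units (cyclotomic units, or the elements $1 + (\text{group element} - 1)$ and their conjugates), mirroring the proof of \cite[Th. (50.19)]{CR2}. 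The divisibility of the exponent by $p-1$ should come from looking at the one-dimensional characters of order $p$: the corresponding factor is $\mathbb{Z}_p(\zeta_p)^\times$, and $\mathrm{Nrd}(\mathbb{Z}_p[\Gamma]^\times)$ lands in the subgroup that is congruent to $1$ or to $\mathbb{Z}_p^\times$-multiples mod $(1-\zeta_p)$, so the residue field $\mathbb{F}_p(\zeta_p) = \mathbb{F}_p$ — wait, more carefully: the obstruction is that the norm-one-mod-$\mathfrak{p}$ condition forces a cyclic quotient of order $p-1$ to survive; I would isolate this by composing with the map to $\mathcal{O}_{F_i}^\times/(1+\mathfrak{p}_i) \cong \mathbb{F}_{q_i}^\times$ and checking the image of $\mathbb{Z}_p[\Gamma]^\times$ there is the image of $\mathbb{Z}_p^\times$, hence of index divisible by $p-1$ for an appropriate $i$.

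For (ii), the point is that the characters whose kernel is trivial — the faithful ones — are exactly the ones that can "see" the full order $|\Gamma|$, whereas after projecting modulo all the $\Delta \in \Xi(\Gamma)$ one only retains information from faithful characters of the various quotients $\Gamma/\Delta$. So I would argue that the kernel of ${\rm DT}(\mathbb{Z}[\Gamma]) \to \bigoplus_{\Delta} {\rm DT}(\mathbb{Z}[\Gamma/\Delta])$ is supported on the Wedderburn components $A_i$ for which $\chi_i$ is faithful, and on each such component the corresponding quotient $\mathcal{O}_{F_i}^\times / (\text{image of units})$ has exponent dividing $|\Gamma|/p$: the gain of the factor $p$ compared to (i) comes from the fact that a faithful irreducible character of a $p$-group has degree $p^{m_i}$ with $m_i \ge 1$ and its field $F_i$ has ramification index $p^{m_i-1}(p-1)$ over $\mathbb{Q}_p$ — so the "extra" factor of $p$ in the naive bound $(1-1/p)|\Gamma|$ is not needed because the relevant units are already hit. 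Alternatively, and perhaps more cleanly, I would use functoriality: the inflation/deflation maps relating $K_0(\mathbb{Z}[\Gamma],\mathbb{Q}[\Gamma])$ and $K_0(\mathbb{Z}[\Gamma/\Delta],\mathbb{Q}[\Gamma/\Delta])$ fit $\mathrm{Nrd}(\mathbb{Z}_p[\Gamma]^\times)$ and $\mathrm{Nrd}(\mathbb{Z}_p[\Gamma/\Delta]^\times)$ into a compatible system, and combine this with Ullom's original result for the kernel group $D(\mathbb{Z}[\Gamma])$ (where the analogous "$|\Gamma|/p$" statement is classical, cf. \cite[Th. (50.19)]{CR2} and the refinement for $p$-groups) — pushing it across the connecting map $\partial^0$ and controlling what happens in the free part by the reduced-norm description above.

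The main obstacle, I expect, will be the sharp lower bound, i.e. showing $p-1$ genuinely divides the exponent in (i) rather than merely bounding above — this requires producing an element of ${\rm DT}(\mathbb{Z}[\Gamma])$ of order exactly divisible by $p-1$, equivalently showing $\mathrm{Nrd}(\mathbb{Z}_p[\Gamma]^\times)$ does \emph{not} surject onto the residue-field units of some component $\mathcal{O}_{F_i}$. This is exactly the kind of delicate point about integral group-ring units that the acknowledgements hint caused an error in an earlier version, so I would be careful to base it on a concrete congruence: every element of $\mathbb{Z}_p[\Gamma]^\times$ reduces modulo the Jacobson radical to an element of $\mathbb{F}_p[\Gamma/\Phi]^\times$ for $\Phi$ the Frattini-type quotient, hence its reduced norm at a component $F_i$ with residue field $\mathbb{F}_q$ lands in the image of $\mathbb{F}_p^\times$ (a group of order $p-1$ that need not be all of $\mathbb{F}_q^\times$, but here $q$ could equal $p$ for one-dimensional characters of order $p$, giving equality — so I would instead locate the obstruction in the \emph{comparison} with $\mathrm{Nrd}(\mathcal{M}^\times) = \mathbb{Z}_p[\zeta_p]^\times$, whose reduction is $\mathbb{F}_p^\times$ as well, forcing a more careful look at the pro-$p$ parts). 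Getting this dichotomy exactly right, and making sure the $(1-1/p)|\Gamma|$ bound is not accidentally improvable (so that it is the correct statement), is where the real work lies; everything else is bookkeeping with Wedderburn components and cyclotomic units.
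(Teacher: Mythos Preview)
Your reduction of (iii) to (i) and (ii) via \eqref{useful decomp} is correct and matches the paper. Your outline of the upper bound in (i) is on the right track: Wedderburn decomposition plus Ullom's argument in \cite[pp.~257--259]{CR2} is exactly what the paper does, after first noting (via Schilling's theorem \cite[Th.~(74.17)]{CR2}) that each centre field $F_i$ equals $\mathbb{Q}_p(\omega_i)$ for a nontrivial $p$-power root of unity, hence is totally ramified over $\mathbb{Q}_p$, so that $u_i^{p-1} \equiv 1 \pmod{\mathfrak{p}_i}$ for any $u_i \in \mathcal{O}_i^\times$; Ullom's argument then handles tuples with all components $\equiv 1$.

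For the lower bound $p-1 \mid \exp({\rm DT}(\mathbb{Z}[\Gamma]))$ you are overcomplicating matters and, as you suspect, your congruence approach does not close. The paper's argument is one line: choose a normal subgroup $\Delta$ of index $p$; the projection ${\rm DT}(\mathbb{Z}[\Gamma]) \to {\rm DT}(\mathbb{Z}[\Gamma/\Delta])$ is surjective because $\mathcal{M}_p^\times \to (e_\Delta \mathcal{M}_p)^\times$ is; and ${\rm DT}(\mathbb{Z}[C_p])$ is cyclic of order $p-1$ by \cite[Cor.~8.2]{BW}. No analysis of $\mathrm{Nrd}(\mathbb{Z}_p[\Gamma]^\times)$ is required.

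Your proposal for (ii) has a genuine gap. The claim that the kernel is ``supported on faithful components'' with the gain coming from faithful characters having degree $\ge p$ cannot work: a cyclic $p$-group has faithful characters of degree $1$, and in the case where (ii) has content (cf.\ Remark~\ref{abelian case remark}(i)) there are \emph{no} faithful irreducible characters at all, so there are no such components. The paper's mechanism is different. One fixes, via Roquette \cite{roquette} (or \cite{crw}), representations $T_i: \Gamma \to \mathrm{GL}_{n_i}(\mathcal{O}_i)$ with integral values. If the class of $x = {\rm Nrd}(y)$ lies in the kernel of the projection to ${\rm DT}(\mathbb{Z}_p[\Gamma/\ker\psi_i])$, then there is $z_i \in \mathbb{Z}_p[\Gamma/\ker\psi_i]^\times$ with matching reduced norm, and a binomial-theorem computation using $T_i(\gamma)^{|\Gamma_i|} = I$ and $|\Gamma_i| \in \mathfrak{p}_i$ yields
\[
\det(T_{i,*}(y)) \equiv \varepsilon_\Gamma(y)^{\psi_i(1)} \pmod{\mathfrak{p}_i},
\]
with $\varepsilon_\Gamma$ the augmentation. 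This lets one factor $x = {\rm Nrd}(u_0) \cdot (1,v_1,\dots,v_t)$ with $u_0 \in \mathbb{Z}_p^\times \subset \mathbb{Z}_p[\Gamma]^\times$ and each $v_i \equiv 1 \pmod{\mathfrak{p}_i}$; Ullom then kills the second factor with exponent $p^{n-1}$. The point is that membership in the kernel delivers the congruences $v_i \equiv 1$ \emph{directly}, without first raising to the $(p-1)$-st power --- that is where the saving of $(p-1)$ comes from, and it has nothing to do with character degrees.
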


\begin{proof} 
Fix a normal subgroup $\Delta$ of $\Gamma$ and set $e_\Delta:=|\Delta|^{-1} \sum_{g \in \Delta} g$. For each prime $\ell$ we write $\mathcal{M}_{\ell, \Gamma/\Delta} := e_\Delta\mathcal{M}_\ell$. Then, since the order $\mathcal{M}_\ell$ is maximal, the central idempotent $e_\Delta$ belongs to $\mathcal{M}_\ell$, and so $\mathcal{M}_\ell$ decomposes as a direct sum $(1-e_\Delta)\mathcal{M}_\ell \oplus \mathcal{M}_{\ell, \Gamma/\Delta}$. This decomposition implies that $\mathcal{M}_{\ell, \Gamma/\Delta}$ is a maximal order in $\QQ_\ell[\Gamma/\Delta]$, and also that the  natural map $\mathcal{M}_\ell^\times \to \mathcal{M}^\times_{\ell, \Gamma/\Delta}$ is surjective. By the isomorphism (\ref{useful decomp}), we can therefore deduce that the natural projection map 
\begin{equation}\label{canon surj} {\rm DT}(\mathbb{Z}[\Gamma]) \to {\rm DT}(\mathbb{Z}[\Gamma/\Delta])\end{equation}
is surjective. 
	
We now assume that $|\Gamma| = p^n$. In this case, the set $\Sigma(\Gamma)$ is equal to $\{p\}$ and so (\ref{useful decomp}) implies that the natural map ${\rm DT}(\mathbb{Z}[\Gamma]) \to {\rm DT}(\mathbb{Z}_p[\Gamma])$ is bijective. In particular, if we can prove claims (i) and (ii), then claim (iii) is clearly also true. 
	
To do this, we first choose $\Delta$ to have index $p$ (this can be done since, in this case, any subgroup of $\Gamma$ of index $p$ is normal), then ${\rm DT}(\mathbb{Z}[\Gamma/\Delta])$ is cyclic of order $p-1$ (see, for example, \cite[Cor. 8.2]{BW}), and so the surjection (\ref{canon surj}) implies that the exponent of ${\rm DT}(\mathbb{Z}[\Gamma])$ is divisible by $p-1$, as stated in claim (i). 
	
Next, to prove that the exponent of ${\rm DT}(\mathbb{Z}[\Gamma]) = {\rm DT}(\mathbb{Z}_p[\Gamma])$ divides $(p-1)p^{n-1}$, we recall that, by Schilling's Theorem (cf. \cite[Th. (74.17)]{CR2}), for a suitable natural number $t$, the Wedderburn decomposition of $\QQ_p[\Gamma]$ has the form 
\begin{equation}\label{wedd decomp} \mathbb{Q}_p[\Gamma] \cong \QQ_p\times \prod_{i =1}^{i=t}{\rm M}_{n_{i}}(F_{i}),\end{equation}
where each $n_i$ is a natural number and each $F_i$ is a $p$-adic cyclotomic field of the form $\mathbb{Q}_p(\omega_{i})$ for a suitable  non-trivial $p$-power root of unity $\omega_i$. In the sequel we set $F_0:=\QQ_p$, and for each $i$, we write $\mathcal{O}_i$ for the valuation ring of $F_i$ and $\mathfrak{p}_i$ for the maximal ideal of $\mathcal{O}_i$. 

In this way, by \cite[Th. 2.2(iv)]{BW} we have that
\[  {\rm Nrd}\bigl(\mathcal{M}^\times_p\bigr) = \prod_{i =0}^{i=t}\mathcal{O}^\times_{i}, \]
where the right hand side is the unit group of the integral closure of $\Z_p$ in the centre $\prod_{i=0}^{i=t}F_i$ of $\QQ_p[\Gamma]$. Therefore, each element $x$ of ${\rm Nrd}\bigl(\mathcal{M}^\times_p\bigr)$ can be written in the form 
\begin{equation}
\label{dt rep} x = \bigl(u_0,u_1,\cdots, u_t\bigr),
\end{equation}
with $u_i \in \mathcal{O}_i^\times$ for each $0\le i\le t$. Moreover, since each extension $F_i/\QQ_p$ is totally ramified, the natural map $\ZZ_p/(p) \to \mathcal{O}_i/\mathfrak{p}_i$ is bijective, and so one has $u_i^{p-1} \equiv 1 \,({\rm mod}\, \mathfrak{p}_i)$. 
	
It follows that $x^{p-1}$ is equal to a tuple $\bigl(v_0,v_1,\cdots ,v_t\bigr)$ in which each element $v_i$ belongs to $\mathcal{O}^\times_i$ and is such that $v_i \equiv 1 \,$ modulo $\,\mathfrak{p}_i$. In view of the isomorphism (\ref{tor}), in order to prove claim (i), it is therefore enough to show that the $p^{n-1}$-st power of any such tuple belongs to $\mathrm{Nrd}(\mathbb{Z}_p[\Gamma]^{\times})$. This is exactly what is proved by the argument of Ullom that is discussed on pp. 257-259 of \cite{CR2} (in which our tuple corresponds to the element $\alpha$), and so this concludes the proof of claim (i). 
	
To prove claim (ii), we note that the components in the Wedderburn decomposition (\ref{wedd decomp}) are in bijective correspondence with a set of representatives for the orbits of the natural action of $\Gal(\QQ_p^c/\QQ_p)$ on the set $\widehat{\Gamma}_p$ of irreducible $\QQ_p^c$-valued characters of $\Gamma$. More precisely, we fix a subset $\{\psi_j: 0\le j \le t\}$ of $\widehat{\Gamma}_p$ such that $\psi_0$ is the trivial character and set $F_0: = \QQ_p$, $\mathcal{O}_0 = \ZZ_p$ and $n_0 = 1$ for $j=0$. For each character $\psi_j$, we let $T_{j,*}$ denote the $\QQ_p$-linear map $\QQ_p[\Gamma] \to {\rm M}_{n_j}(F_j)$ that is induced by a choice of a representation $T_j: \Gamma \to {\rm GL}_{n_j}(F_j)$ of $\psi_j$. In this way, for each $j$, the projection map  
\[ \QQ_p[\Gamma] \to M_{n_j}(F_j) \]
that is induced by (\ref{wedd decomp}) coincides with $T_{j,*}$. 
We fix an embedding $\QQ^c \to \QQ_p^c$ and we shall identify $\widehat{\Gamma}$ with $\widehat{\Gamma}_p$ via the choice of the embedding.
Then, since $\Gamma$ is nilpotent of odd order, the results of Roquette in \cite{roquette} (or see the more general results of Cliff, Ritter and Weiss in \cite{crw}) imply that one can choose a $T_j$ (of $\psi_j$) such that it takes values in ${\rm GL}_{n_j}(\mathcal{O}_j)$ and this is what we shall do. 
	
Set $\Delta_j := \ker(\psi_j) \in \Xi(\Gamma)$ and $\Gamma_j := \Gamma/\Delta_j$, we note that $T_{j,\ast}$ factors through the projection map $\pi_j: \QQ_p[\Gamma] \to \QQ_p[\Gamma_j]$. We also note that $T_{0,\ast}$ is the natural augmentation map $\varepsilon_\Gamma: \QQ_p[\Gamma] \to \QQ_p$. 
	
Now, we fix an element $y$ in $\mathcal{M}_p^\times$. Then, for each $j$, the element $u_j$ that occurs in the decomposition (\ref{dt rep}) of $x={\rm Nrd}(y)$ belongs to $\mathcal{O}_j^\times$ and can be computed as ${\rm det}(T_{j,\ast}(y))$. 
	
To prove claim (ii), we claim that it is enough to show that if the image $\overline{x}$ of $x$ in ${\rm DT}(\ZZ_p[\Gamma])$ belongs to the kernel of the projection map $\pi_{i,*}: {\rm DT}(\ZZ_p[\Gamma]) \to {\rm DT}(\ZZ_p[\Gamma_i])$ for any given $i$ with $1\le i\le t$, then one has 
\begin{equation}\label{wanted cong} {\rm det}(T_{i,\ast}(y)) \equiv (T_{0,\ast}(y))^{\psi_i(1)}\,\,\,\, \pmod{\mathfrak{p}_i}.\end{equation}
To see that the above result implies claim (ii), we assume that these congruences are valid for each $1\le i\le t$. Set $v_i := u_0^{-\psi_i(1)}u_i$, we note that such element satisfies $v_i \equiv 1 \, $ modulo $\,\,\mathfrak{p}_i$ for all $i$. In particular, since $u_0$ belongs to $\ZZ_p^\times \subseteq \ZZ_p[\Gamma]^\times$, these congruences and the argument of claim (i) combine to imply that the element 
\begin{align*} x^{p^{n-1}} =&\, (u_0,u_1,\cdots u_t)^{p^{n-1}} \\
=&\, (u_0,u_0^{\psi_1(1)},\cdots , u_0^{\psi_t(1)})^{p^{n-1}}\times ( 1, v_1, \cdots , v_t)^{p^{n-1}}\\
=&\, {\rm Nrd}(u_0)^{p^{n-1}} \times ( 1, v_1, \cdots , v_t)^{p^{n-1}}\end{align*}
belongs to $\ZZ_p[\Gamma]^\times$, and hence that $\overline{x}^{p^{n-1}} = 0$, as required. 
	
To prove the stated congruence (\ref{wanted cong}), we assume that $\overline{x}$ belongs to the kernel of $\pi_{i,*}$. Then, the isomorphism (\ref{tor}) implies that there exists an element $z_i = \sum_{\gamma \in \Gamma_i}c_{i,\gamma}\gamma \in \ZZ_p[\Gamma_i]^\times$ such that 
\begin{equation}\label{aug point}
\pi_{i, *}  ({\rm Nrd}_{\QQ_p[\Gamma]}(y)) = {\rm Nrd}_{\QQ_p[\Gamma_i]}(z_i) \end{equation}
and hence,
\begin{equation}\label{useful cons} {\rm det}(T_{i,\ast}(y)) = {\rm det}(T_{i,\ast}(z_i)) = {\rm det}\bigl( \sum_{\gamma \in \Gamma_i}c_{i,\gamma} \cdot T_i(\gamma)\bigr).\end{equation}
	
Now, we note that (by the choice of $T_i$ as above) each matrix $T_i(\gamma)$ belongs to ${\rm GL}_{n_i}(\mathcal{O}_i)$. Upon setting $m_i := |\Gamma_i|$, the matrix $T_i(\gamma)^{m_i} = T_i(\gamma^{m_i})$ is equal to the $n_i\times n_i$ identity matrix $I_{n_i}$. Hence, since $m_i$ belongs to $\mathfrak{p}_i$, the binomial theorem implies that there are congruences modulo ${\rm M}_{n_i}(\mathfrak{p}_i)$ of the form  
\begin{align*}\bigl( \sum_{\gamma \in \Gamma_i}c_{i,\gamma} \cdot T_i(\gamma)\bigr)^{m_i} \equiv &\,\sum_{\gamma \in \Gamma_i}c^{m_i}_{i,\gamma} \cdot I_{n_i}\\
\equiv &\,\bigl( \sum_{\gamma \in \Gamma_i}c^{m_i}_{i,\gamma}\bigr) \cdot I_{n_i} \\
\equiv &\, \bigl( \sum_{\gamma \in \Gamma_i}c_{i,\gamma}\bigr)^{m_i}\cdot I_{n_i}\\
\equiv &\, (T_{0,\ast}(y))^{m_i}\cdot I_{n_i} \quad \quad \pmod{ {\rm M}_{n_i}(\mathfrak{p}_i)},
\end{align*}
where the last congruence is true since $T_{0, \ast}$ is the natural augmentation map and so, in terms of the expression \eqref{aug point}, we have that $T_{0,\ast}(y)$ is equal to $\sum_{\gamma \in \Gamma_i}c_{i,\gamma}$.  
	
Since $n_i = \psi_i(1)$, these congruences and the equality (\ref{useful cons}) combine to imply that 
\begin{align*} 
{\rm det}(T_{i,\ast}(y))^{m_i} =&\, {\rm det}\bigl(\bigl( \sum_{\gamma \in \Gamma_i}c_{i,\gamma} \cdot T_i(\gamma)\bigr)^{m_i}\bigr) \\
\equiv &\, {\rm det}\bigl((T_{0,\ast}(y))^{m_i}\cdot I_{n_i}\bigr)  \quad \quad \pmod{\mathfrak{p}_i } \\
\equiv &\, \bigl((T_{0,\ast}(y))^{\psi_i(1)}\bigr)^{m_i} \quad \quad \pmod{\mathfrak{p}_i }.
\end{align*}
Finally, since ${\rm det}(T_{i,\ast}(x))$ and $(T_{0,\ast}(x))^{\psi_i(1)}$ both belong to $\mathcal{O}_i^\times$ (and the order of $\bigl(\mathcal{O}_i/\mathfrak{p}_i\bigr)^\times$ is prime to $m_i$), the above congruence then implies the required congruence (\ref{wanted cong}). This completes the proof of the claim. \end{proof}

\begin{remark}\label{abelian case remark}\
\begin{itemize} 
\item[(i)] The result of Theorem \ref{Prop: odd degree tor group order}(ii) is of interest only if $\Xi(\Gamma)$ does not contain the trivial subgroup of $\Gamma$. This condition is equivalent to requiring that $\Gamma$ has no faithful irreducible characters and such groups are completely classified by Gaschutz in \cite{Gaschutz}. 
\item[(ii)] If $\Gamma$ is abelian, then the set $\Xi(\Gamma)$ is equal to the set of subgroups $\Delta$ of $\Gamma$ with the property that the quotient group $\Gamma/\Delta$ is cyclic. 
\end{itemize} 
\end{remark}

\section{Results for extensions of odd prime-power degree}
In this section, we try to provide some new evidence for Conjecture~\ref{bbh conj} in the case of extensions of odd prime-power degree.

\subsection{The canonical relative elements of Bley, Burns and Hahn}
Firstly, we recall the key definition of the canonical relative elements of Bley, Burns and Hahn from \cite{BBH}.

\subsubsection{The local element}\label{S: local elt}
In this section, we fix a finite odd degree Galois extension $E/F$ of local fields of residue characteristic $\ell$ and set $\Gamma := {\rm Gal}(E/F)$. We follow Bley, Burns and Hahn \cite[\S 7A]{BBH} in defining the local relative elements.

We write $\Sigma(E)$ for the set of $\QQ_\ell$-linear field embeddings $E\rightarrow \mathbb{Q}_{\ell}^{c}$. We then define a (free) $\ZZ_\ell[\Gamma]$-module by setting 
\[ H_{E}:=\prod_{\Sigma(E)}\mathbb{Z}_{\ell},\]
upon which $\Gamma$ acts via its usual pre-composition action on $\Sigma(E)$, and we consider the isomorphism of $\QQ_\ell^c[\Gamma]$-modules 
\[ \kappa_{E}:\mathbb{Q}^{c}_{\ell}\otimes_{\mathbb{Q}_{\ell}}E \rightarrow \prod_{\Sigma(E)} \mathbb{Q}_{\ell}^{c} = \mathbb{Q}^{c}_{\ell}\otimes_{\ZZ_{\ell}}H_E\]
that sends $x \otimes l$, for every $x \in \mathbb{Q}_{\ell}^{c}$ and $l$ in $E$ to $(\sigma(l)x)_{\sigma \in \Sigma(E)}$. 

Suppose $E/F$ is weakly ramified. Then, by the result \cite[Th. 1]{E} of Erez, $\mathcal{A}_{E/F}$ is a full projective $\ZZ_\ell[\Gamma]$-sublattice of $E$ and so, following \cite[\S 7A]{BBH}, we obtain a well-defined element of $K_0(\ZZ_\ell[\Gamma],\QQ_\ell^c[\Gamma])$ by setting 
\[ \Delta(\mathcal{A}_{E/F}) := [\mathcal{A}_{E/F},\kappa_{E}, H_{E}].\] 

For the next definition, we use the following notation: for each prime $\ell$ and embedding of fields $j^c_{\ell}:\mathbb{Q}^{c}\rightarrow \mathbb{Q}^{c}_{\ell}$, we also write $j^c_\ell$ for the induced homomorphism of rings $\QQ^c[G] \to \QQ^c_\ell[G]$ and consider the homomorphism of abelian groups 
\begin{align}\label{jlc def}
 j^c_{\ell, *}: K_{0}(\mathbb{Z}[\Gamma], \mathbb{Q}^c[\Gamma]) &\, \to K_{0}(\mathbb{Z}_\ell[\Gamma],\mathbb{Q}_\ell^c[\Gamma]), \\ 
[P, \phi, Q] &\, \mapsto  [P_\ell, \mathbb{Q}_\ell^c\otimes_{\mathbb{Q}^c, j^c_\ell}\phi, Q_\ell].   \nonumber
\end{align}

\begin{definition}\label{local a def} We set 
\[ T^{(2)}_{E/F}:= \tau_{F}^{\Gamma}\cdot (\psi_{2,*}-1)(\tau_{E/F}^{\prime}) \in \zeta(\QQ^c[\Gamma])^\times,\]
where the elements $\tau^{\Gamma}_{F}$ and $\tau_{E/F}^{\prime}$ are as in \S\ref{S: ggs}. 

We then define an element of $K_{0}(\mathbb{Z}_{\ell}[\Gamma], \mathbb{Q}_{\ell}^{c}[\Gamma])$ by setting 
\[\mathfrak{a}_{E/F}:= \Delta(\mathcal{A}_{E/F}) - \delta_{\ZZ_\ell,\QQ_\ell^c,\Gamma}(j^c_{\ell}(T^{(2)}_{E/F})) - U_{E/F},  \]
where $U_{E/F}$ is the canonical `unramified' element of $K_{0}(\mathbb{Z}_{\ell}[\Gamma], \mathbb{Q}_{\ell}^{c}[\Gamma])$ that is defined by Breuning in \cite{B04}.
\end{definition}

In particular, we note (by \cite[Prop. 7.1]{BBH}) that $\mathfrak{a}_{E/F}$ is independent of the choice of embedding $j_\ell^c$ and belongs to $K_{0}(\mathbb{Z}_{\ell}[\Gamma], \mathbb{Q}_{\ell}[\Gamma])$.

\subsubsection{The global element}
Let $L/K$ be a weakly ramified finite Galois extension of number fields of odd degree with Galois group $G:={\rm Gal}(L/K)$. In \cite[\S 2A3 and \S 5]{BBH}, Bley, Burns and Hahn define a global canonical relative elements of $K_0(\ZZ[G],\QQ^c[G])$ 
\[\mathfrak{a}_{L/K}:= \Delta(\mathcal{A}_{L/K}) - \delta_{\ZZ,\QQ^c,G}(T^{(2)}_{L/K}) ,  \]
where $\Delta(\mathcal{A}_{L/K})$ and $T^{(2)}_{L/K}$ are the global analogues of the elements defined above.

Fix a place $w$ of $L$ above a place $v$ of $K$, we write $G_w$ for the decomposition subgroup of $w$ in $G$, and for each $\chi \in R_G$, we let $\chi_w$ denote the restriction of $\chi$ to $G_w$ (therefore regard $\chi_w$ as characters of ${\rm Gal}(L_w/K_v)$ via identifying $G_w={\rm Gal}(L_w/K_v)$). In this article, we will often use the decomposition property of the relative element (taken from Theorem 7.6 of loc.cit.) that 
\begin{equation}\label{Eq: frak a decomp}
\mathfrak{a}_{L/K} = \sum_{\ell}\sum_{v|\ell} {\rm i}_{G_{w}, \mathbb{Q}_{\ell}}^{G, *} (\mathfrak{a}_{L_{w}/K_{v}}),
\end{equation}
where the first sum runs over all rational primes $\ell$ and the second over all places $v$ of $K$ of residue characteristic $\ell$.

\begin{remark}\label{tame local remark} 
Since (by \cite[Prop. 5.5]{BBH}) $\mathfrak{a}_{L/K}$ belongs to $K_0(\ZZ[G],\QQ[G])$, the sum on the right hand side of the formula in \eqref{Eq: frak a decomp} can only have finitely many non-zero $\ell$-primary components. In fact, Bley, Burns and Hahn \cite[Th. 8.1]{BBH} have shown that $\mathfrak{a}_{L_w/K_v}$ vanishes if the extension $L_w/K_v$ is tamely ramified (which is true for all $v$ that do not divide the order of $G$). \end{remark}

Next we recall a variant of the classical `unramified characteristic' that is introduced in loc. cit.

\begin{definition}\label{Def: loc c} \
\begin{itemize}
\item[(i)] Let $E/F$ be a finite odd Galois extension of $\ell$-adic fields and set $\Gamma := \Gal(E/F)$. Then, the `twisted unramified characteristic' of $E/F$ is the element of $K_{0}(\mathbb{Z}_{\ell}[\Gamma], \mathbb{Q}_{\ell}[\Gamma])$ that is obtained by setting
\[\mathfrak{c}_{E/F}:= \delta_{\ZZ_\ell,\QQ_\ell,\Gamma}((1 - \psi_{2, *})(y_{E/F}))  , \]
where $y_{E/F}$ is the equivariant unramified characteristic defined in \S\ref{S: ggs}. In particular, we recall from \cite[Rem. 7.5]{BBH} that $\mathfrak{c}_{E/F}=0$ if $E/F$ is tamely ramified.
\item[(ii)] Let $L/K$ be a finite odd Galois extension of number fields and set $G:={\rm Gal}(L/K)$. Then, the `idelic twisted unramified characteristic' of $L/K$ is defined to be the element 
\[\mathfrak{c}_{L/K} = \sum_{\ell}\sum_{v|\ell} {\rm i}_{G_{w}, \mathbb{Q}_{\ell}}^{G, *} (\mathfrak{c}_{L_{w}/K_{v}}) \in K_{0}(\mathbb{Z}[G], \mathbb{Q}[G]). \]
\end{itemize}
\end{definition}

\begin{remark}\label{functorial remark} 
Conjecture \ref{bbh conj} is compatible with the change of extension functors. To see this, we recall from \cite[Th. 6.1 and Rem. 8.9]{BBH} that, for any subgroup $J$ of $G$, the following equalities are valid.
\begin{itemize}
\item[(i)] $\rho^{G,*}_{J}(\mathfrak{a}_{L/K}) = \mathfrak{a}_{L/L^J}$ and $\rho^{G,*}_{J}(\mathfrak{c}_{L/K}) = \mathfrak{c}_{L/L^J}$, where $\rho^{G,*}_J$ denotes the restriction map  $K_{0}(\mathbb{Z}[G], \mathbb{Q}[G]) \rightarrow K_{0}(\mathbb{Z}[J], \mathbb{Q}[J])$.
\item[(ii)] If $J$ is normal to $G$, with $Q := G/J$, then $q^{G,*}_{Q}(\mathfrak{a}_{L/K}) = \mathfrak{a}_{L^J/K}$ and 
$q^{G,*}_{Q}(\mathfrak{c}_{L/K}) = \mathfrak{c}_{L^J/K}$, where $q^{G}_{Q}$ denotes the coinflation map $K_{0}(\mathbb{Z}[G], \mathbb{Q}[G])\rightarrow K_{0}(\mathbb{Z}[Q], \mathbb{Q}[Q])$.
\end{itemize}
\end{remark}

\subsection{Proof of Theorem~\ref{Thm: p-group main intro}}
The motivation for this section is the fact that there are many weakly ramified extensions $L/K$ of $p$-power degree in which the decomposition subgroup of every $p$-adic place is non-abelian, so Proposition \ref{Th: BBH thm}(ii) does not apply directly. (For example, in \cite[Th. 1.4]{V-N}, Vinatier has constructed an infinite family of weakly ramified Galois extensions of $K = \QQ$ of degree $27$ in which the decomposition subgroup of each $3$-adic place of $L$ is non-abelian.)

However, by combining Proposition \ref{Th: BBH thm} with the result of Theorem \ref{Prop: odd degree tor group order}, we are able to obtain an upper bound on the order of the element $\mathfrak{a}_{L/K}$ and can thereby provide evidence in support of Conjecture~\ref{bbh conj} for a more general class of extensions. 
%In this way, we show the following result. 

Taking account of the decomposition of $\mathfrak{a}_{L/K}$ given in \eqref{Eq: frak a decomp}, the result of Bley, Burns and Hahn recalled in Remark \ref{tame local remark} and the definition of $\mathfrak{c}_{L/K}$ as an explicit sum of local terms (see Definition~\ref{Def: loc c}(ii)), it is enough for us to show that the stated equalities hold for each ($p$-adic) place $v$ of $K$ that ramifies wildly in $L$.

\begin{prop}\label{p power thm1 pre prop}
Fix an odd prime $p$. Suppose that $L/K$ is a weakly ramified Galois extension of number fields of $p$-power degree and that $p$ is unramified in $K$. If $v$ is any $p$-adic place of $K$ that is wildly ramified in $L$, we write $G_w:=\Gal(L_w/K_v)$ for some place $w$ of $L$ above $v$ and $n_w := |G_w|$. Then, in $K_0(\ZZ_p[G_w],\QQ_p[G_w])$, one has
\begin{equation}\label{reduced equalities} 
\frac{n_w}{p}\cdot \mathfrak{a}_{L_w/K_v} = \frac{n_w}{p}\cdot \mathfrak{c}_{L_w/K_v} = 0.
\end{equation} 
\end{prop}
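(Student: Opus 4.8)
The plan is to reduce the local statement to the structural results about $\mathrm{DT}(\ZZ_p[G_w])$ proved in Theorem~\ref{Prop: odd degree tor group order}. First I would observe that, by the definitions in \S\ref{S: local elt} and Definition~\ref{Def: loc c}(i), both $\mathfrak{a}_{L_w/K_v}$ and $\mathfrak{c}_{L_w/K_v}$ already lie in $K_0(\ZZ_p[G_w],\QQ_p[G_w])$; moreover, by Proposition~\ref{Th: BBH thm}(i) (applied to the local extension, via the functoriality/decomposition formula \eqref{Eq: frak a decomp}) the difference $\mathfrak{a}_{L_w/K_v}-\mathfrak{c}_{L_w/K_v}$ is torsion, hence lies in $\mathrm{DT}(\ZZ_p[G_w])$. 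So it suffices to bound separately (a) the torsion element $\mathfrak{a}_{L_w/K_v}-\mathfrak{c}_{L_w/K_v}$ and (b) the element $\mathfrak{c}_{L_w/K_v}$ itself — and in fact, since $\mathfrak{c}_{L_w/K_v}$ is visibly torsion (being the image under $\delta_{\ZZ_p,\QQ_p,G_w}$ of an element of $\zeta(\QQ_p[G_w])^\times$ with rational components), it too lies in $\mathrm{DT}(\ZZ_p[G_w])$, and hence so does $\mathfrak{a}_{L_w/K_v}$. Thus the whole problem is to show that multiplication by $n_w/p$ kills these particular elements of $\mathrm{DT}(\ZZ_p[G_w])$.

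Next I would bring in Theorem~\ref{Prop: odd degree tor group order}(ii): the kernel of the diagonal projection $\mathrm{DT}(\ZZ_p[G_w]) \to \bigoplus_{\Delta\in\Xi(G_w)}\mathrm{DT}(\ZZ_p[G_w/\Delta])$ has exponent dividing $|G_w|/p = n_w/p$. So it is enough to prove that the images of $\mathfrak{a}_{L_w/K_v}$ and $\mathfrak{c}_{L_w/K_v}$ in each $\mathrm{DT}(\ZZ_p[G_w/\Delta])$, for $\Delta = \ker(\psi)$ with $\psi$ an irreducible character of $G_w$, already vanish. Here I would use the coinflation functoriality of Remark~\ref{functorial remark}(ii) (in its local form, from \cite[Th.~6.1, Rem.~8.9]{BBH}): the image of $\mathfrak{a}_{L_w/K_v}$ under $q^{G_w,*}_{G_w/\Delta}$ is $\mathfrak{a}_{(L_w)^\Delta/K_v}$, and similarly for $\mathfrak{c}$. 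The quotient $G_w/\Delta = G_w/\ker(\psi)$ has a faithful irreducible character, so by Gaschütz's classification (Remark~\ref{abelian case remark}(i)) it is either cyclic, or — since it has odd prime-power order — more generally a group whose structure we understand. The crucial point is that for such quotients the inertia and decomposition subgroups that arise are abelian (indeed the full group $G_w/\Delta$ is abelian in the prime-power faithful-character case, or at any rate satisfies hypotheses (a)–(c) of Proposition~\ref{Th: BBH thm}(ii)), so that Proposition~\ref{Th: BBH thm}(ii) applies to the extension $(L_w)^\Delta/K_v$ and forces $\mathfrak{a}_{(L_w)^\Delta/K_v} = \mathfrak{c}_{(L_w)^\Delta/K_v}$; combined with the fact that $\mathfrak{c}_{(L_w)^\Delta/K_v}$ vanishes whenever $(L_w)^\Delta/K_v$ is tamely ramified, and a direct check of the genuinely wildly ramified cyclic/abelian case using hypothesis (c) (that $K_v$ is absolutely unramified, which holds since $p$ is unramified in $K$), one concludes that each coinflated image is zero.

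The main obstacle I anticipate is the last clause of the previous paragraph: one needs to know not merely that $\mathfrak{a} = \mathfrak{c}$ on the quotients (which is Proposition~\ref{Th: BBH thm}(ii)), but that this common value is itself killed by the relevant power of $p$ — ideally that it already vanishes — in $\mathrm{DT}(\ZZ_p[G_w/\Delta])$. For a cyclic quotient $G_w/\Delta$ of order $p^m$, one would analyse $\mathfrak{c}_{(L_w)^\Delta/K_v} = \delta((1-\psi_{2,*})(y_{(L_w)^\Delta/K_v}))$ explicitly: the equivariant unramified characteristic $y$ is supported on unramified characters, and using $p$ unramified in $K_v$ one can show the relevant reduced-norm class lands in $\mathrm{Nrd}(\ZZ_p[G_w/\Delta]^\times)$, hence vanishes in $\mathrm{DT}$. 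I would also need to double-check that the decomposition subgroup of a wildly ramified place in the faithful-character quotient really does satisfy Proposition~\ref{Th: BBH thm}(ii)(a)–(c); this is where Gaschütz's classification (odd prime-power order, no faithful character on proper quotients but faithful on $G_w/\Delta$) is used to pin down the possible structure. Once these local vanishing statements are in hand, reassembling via $\eqref{Eq: frak a decomp}$ and summing over wildly ramified $v$ gives Theorem~\ref{Thm: p-group main intro}, with $n(L/K) = \max_w n_w$.
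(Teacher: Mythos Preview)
Your proposal has a genuine gap at the crucial step. You want to apply Theorem~\ref{Prop: odd degree tor group order}(ii) directly to the full decomposition group $G_w$, and then verify that the coinflated elements $\mathfrak{a}_{(L_w)^\Delta/K_v}$ vanish for each $\Delta\in\Xi(G_w)$. To do this you assert that the quotient $G_w/\Delta$, being a $p$-group with a faithful irreducible character, ``is abelian in the prime-power faithful-character case, or at any rate satisfies hypotheses (a)--(c) of Proposition~\ref{Th: BBH thm}(ii)''. This is false: a $p$-group admits a faithful irreducible character precisely when its centre is cyclic, and there are plenty of non-abelian examples (e.g.\ any extraspecial $p$-group of order $p^{3}$). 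Since the decomposition subgroup of the unique wildly ramified place in $(L_w)^\Delta/K_v$ is the whole of $G_w/\Delta$, condition~(a) of Proposition~\ref{Th: BBH thm}(ii) fails whenever $G_w/\Delta$ is non-abelian; and the inertia subgroup $I_w\Delta/\Delta\cong I_w/(I_w\cap\Delta)$ is a quotient of an elementary abelian $p$-group, hence not cyclic in general either, so condition~(b) also fails. Thus for these quotients you have no tool to force $\mathfrak{a}_{(L_w)^\Delta/K_v}=\mathfrak{c}_{(L_w)^\Delta/K_v}$, and the argument stalls.

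The paper avoids this by inserting an extra reduction step that you are missing. Lemma~\ref{Prop: local abelian inertia p-group} shows that weak ramification forces the inertia subgroup $I_w$ itself to be (elementary) abelian, and Proposition~\ref{key prop ?} proves the induction relation $|G_w/I_w|\cdot\mathfrak{a}_{L_w/K_v} = {\rm i}^{G_w,*}_{I_w,\QQ_p}(\mathfrak{a}_{L_w/B_w})$ where $B_w=(L_w)^{I_w}$. Only \emph{after} passing to the abelian group $I_w$ does the paper invoke Theorem~\ref{Prop: odd degree tor group order}(ii): now every $H\in\Xi(I_w)$ has cyclic quotient $I_w/H$ (Remark~\ref{abelian case remark}(ii)), so Proposition~\ref{Th: BBH thm}(ii) genuinely applies to each $(L_w)^H/B_w$, and since these extensions are totally ramified the corresponding $\mathfrak{c}$-terms visibly vanish via~\eqref{Eq: frak c exp}. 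The factor $n_w/p$ then arises as $|G_w/I_w|\cdot(|I_w|/p)$. A secondary point: Proposition~\ref{Th: BBH thm} is a statement about \emph{global} extensions, so one must either globalise the totally ramified local extension $L_w/B_w$ (the paper does this via Neukirch's result \cite[Cor.~2, p.~156]{neuk solvable}, using that $p$ is unramified in $K$ to ensure $B_w$ contains no $p$-th roots of unity) or establish local functoriality directly as in Remark~\ref{remark above}; your proposal glosses over this. Finally, for $\mathfrak{c}_{L_w/K_v}$ the paper does not go through the diagonal kernel at all: Lemma~\ref{easy part} shows directly that $(1-\psi_{2,*})(y_{L_w/K_v})$ has multiplicative order dividing $|G_w/I_w|$, which already divides $n_w/p$.
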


In the sequel, we fix an embedding $j_p^c:\mathbb{Q}^c \rightarrow \mathbb{Q}_p^c$. For a finite group $\Gamma$, we shall use $j_p^c$ to identify $\widehat{\Gamma}$ with $\widehat{\Gamma}_p$, the set of $\QQ^c_p$-valued irreducible characters of $\Gamma$ (via the isomorphism $\chi \mapsto \chi^{j}$ where $\chi^{j}(g):= j_p^c(\chi(g))$ for all $\chi \in \widehat{\Gamma}$ and $g \in \Gamma$). We also write $j_p^c$ for the induced embedding $\zeta(\QQ^c[\Gamma])^\times \rightarrow \zeta(\QQ^c_p[\Gamma])^\times$.
Let $\mathcal{O}_p^t$ denote the valuation ring of the maximal tamely ramified extension of $\mathbb{Q}_p$, we consider the associated homomorphism of relative $K$-groups,
\begin{align}\label{Eq: Taylor map def0}
j^{t}_{p, *} : K_{0}(\mathbb{Z}_{p}[\Gamma], \mathbb{Q}^{c}_{p}[\Gamma]) &\, \to K_{0}(\mathcal{O}_{p}^{t}[\Gamma], \mathbb{Q}^{c}_{p}[\Gamma]),\\
[P, \phi , Q] &\, \mapsto [\mathcal{O}_{p}^{t}\otimes_{\mathbb{Z}_{p}}P , \phi,  \mathcal{O}_{p}^{t}\otimes_{\mathbb{Z}_{p}} Q].\nonumber
\end{align}
To prove the equalities in \eqref{reduced equalities}, we need to show some preliminary results. 

For the next three results, we fix a weakly ramified Galois extension of $p$-adic fields $E/F$ of arbitrary odd degree, set $\Gamma := \Gal(E/F)$ and write $\Delta$ for the inertia subgroup of $\Gamma$. We also write $\Gamma_1$ (resp. $\Gamma_2$) for the $1$-st (resp. $2$-nd) ramification subgroup (in the lower numbering) of $\Gamma$. The following result is a consequence of the hypothesis that $E/F$ is weakly ramified (i.e. $\Gamma_2$ is trivial).

\begin{lemma}\label{Prop: local abelian inertia p-group}\
\begin{itemize}
\item[(i)] The group $\Gamma_1$ is abelian and of exponent dividing $p$. 
\item[(ii)] The inertia subgroup $\Delta$ is abelian if and only if it is either a $p$-group (and hence, equal to $\Gamma_1$) or cyclic and of order prime-to-$p$.  
\end{itemize}
\end{lemma}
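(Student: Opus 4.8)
The plan is to exploit the standard structure theory of the ramification filtration of a local Galois group, specialised to the weakly ramified case. Recall that for the extension $E/F$ of $p$-adic fields with $\Gamma=\Gal(E/F)$ and inertia subgroup $\Delta=\Gamma_0$, the lower-numbering ramification filtration $\Gamma_0\supseteq \Gamma_1\supseteq \Gamma_2\supseteq\cdots$ has the property that $\Gamma_1$ is the (unique, normal) Sylow $p$-subgroup of $\Delta$ (this is \cite[Ch. IV, Cor. 2 and Prop. 7]{serre-local} or the analogous statement in \cite[Prop. 8(iii)]{E}), and that $\Delta/\Gamma_1$ is cyclic of order prime to $p$. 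The hypothesis that $E/F$ is weakly ramified means precisely that $\Gamma_2=1$.

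For part (i), I would argue as follows. The successive quotients $\Gamma_i/\Gamma_{i+1}$ for $i\geq 1$ embed into the additive group of the residue field of $E$, which has characteristic $p$; hence $\Gamma_i/\Gamma_{i+1}$ is an elementary abelian $p$-group for every $i\geq 1$. Since $\Gamma_2=1$, this already gives that $\Gamma_1=\Gamma_1/\Gamma_2$ is elementary abelian, so $\Gamma_1$ is abelian of exponent dividing $p$ (and is the trivial group if $p\nmid |\Delta|$). This is the most classical part and should take only a few lines.

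For part (ii), I would split into the `if' and `only if' directions. For the `if' direction: if $\Delta$ is a $p$-group then $\Delta=\Gamma_1$ (its Sylow $p$-subgroup), which is abelian by (i); and if $\Delta$ is cyclic of order prime to $p$ then it is trivially abelian. For the `only if' direction, suppose $\Delta$ is abelian but is neither a $p$-group nor a cyclic prime-to-$p$ group. Since $\Delta/\Gamma_1$ is cyclic of order prime to $p$ and $\Gamma_1$ is the Sylow $p$-subgroup, abelianness gives a direct product decomposition $\Delta\cong \Gamma_1\times C$ with $C$ cyclic of order prime to $p$; the two excluded cases are exactly $C=1$ (then $\Delta=\Gamma_1$ is a $p$-group) and $\Gamma_1=1$ (then $\Delta=C$ is cyclic prime-to-$p$), so we may assume both $\Gamma_1\neq 1$ and $C\neq 1$. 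The key point is that the tame quotient $\Delta/\Gamma_1$ acts on $\Gamma_1/\Gamma_2=\Gamma_1$ by conjugation, and a generator of this cyclic quotient acts on the one-dimensional-free-ish $\FF_p$-space $\Gamma_1$ via a nontrivial character coming from the action on the relevant unit/residue data (the classical fact that the tame part acts faithfully, via a character of order dividing the residue-field size, on each graded piece $\Gamma_i/\Gamma_{i+1}$ — see \cite[Ch. IV, Prop. 9, Cor.]{serre-local}). Concretely, on $\Gamma_1/\Gamma_2$ the tame quotient acts through the first fundamental character, which is injective on $\Delta/\Gamma_1$; since $C\neq 1$ this action is nontrivial, contradicting the assumption that $\Delta$ is abelian (which would force the conjugation action of $C$ on $\Gamma_1$ to be trivial). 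Hence $\Delta$ abelian forces $C=1$ or $\Gamma_1=1$, which is the claim.

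The main obstacle — really the only subtle point — is pinning down the faithfulness of the action of the tame quotient $\Delta/\Gamma_1$ on the graded piece $\Gamma_1/\Gamma_2$, i.e. that when $\Gamma_2=1$ a nontrivial prime-to-$p$ part of $\Delta$ cannot centralise $\Gamma_1$. I would handle this by citing the standard description of the action of $\Delta/\Gamma_1$ on $\Gamma_i/\Gamma_{i+1}$ via the $i$-th fundamental character \cite[Ch. IV, \S 2]{serre-local} (equivalently, the formula for this action in \cite[Prop. 8 and its proof]{E}), noting that the first such character $\theta_1$ is injective on $\Delta/\Gamma_1$; this is exactly the input needed. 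Everything else is bookkeeping with the ramification filtration.
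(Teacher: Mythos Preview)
Your proposal is correct and follows essentially the same approach as the paper. For (i) both you and the paper use that $\Gamma_1/\Gamma_2$ is elementary abelian (Serre, Ch.~IV, \S2) together with $\Gamma_2=1$; for (ii) both arguments reduce the ``only if'' direction to the fact that a nontrivial tame quotient $\Delta/\Gamma_1$ cannot centralise a nontrivial $\Gamma_1=\Gamma_1/\Gamma_2$. The only difference is packaging: the paper simply cites \cite[Ch.~IV, \S2, Cor.~2 to Prop.~9]{S2} applied to the abelian extension $E/E^\Delta$ to conclude directly that $e_0>1$ forces $\Gamma_1=\Gamma_2$, whereas you unpack this by invoking the faithfulness of the conjugation action of $\Delta/\Gamma_1$ on $\Gamma_1/\Gamma_2$ via the tame character $\theta_0$ (your ``first fundamental character''; note this is $\theta_0$ in Serre's notation, not $\theta_1$). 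Both routes are the same argument at different levels of detail.
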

\begin{proof} 
At the outset, we note that the group $\Gamma_2$ is a normal subgroup of $\Gamma_1$, and the quotient group $\Gamma_1/\Gamma_2$ is abelian and of exponent dividing $p$ (by \cite[Chap. IV, \S2, Cor. 3 to Prop. 7]{S2}). Then, claim (i) follows directly from the hypothesis that $E/F$ is weakly ramified. 

Turning to claim (ii). If $\Delta$ has order prime-to-$p$, then it is cyclic (since, in this case, $E/F$ is tamely ramified). If $\Delta$ is a $p$-group, then it is equal to $\Gamma_1$ and hence abelian (by claim (i)). 
	
Now, to show the converse, 
we assume $\Delta$ is abelian and set $e_0 := |\Delta/\Gamma_1|$. It is enough for us to consider the case that $e_0>1$. In this case, upon applying the result of \cite[Chap. IV, \S2, Cor. 2 to Prop. 9]{S2} to the abelian extension $E/E^{\Delta}$, one can deduce that $\Gamma_1 = \Gamma_2$ is trivial. It follows that $E/F$ is tamely ramified, and hence that $\Delta$ is cyclic. This completes the proof of claim (ii).  
\end{proof}

The second of the required equalities (\ref{reduced equalities}) follows directly from the next result. 

\begin{lemma}\label{easy part} 
If $E/F$ is as above, then in $K_0(\ZZ_p[\Gamma],\QQ_p[\Gamma])$, one has $|\Gamma/\Delta|\cdot \mathfrak{c}_{E/F} =0$. 
\end{lemma}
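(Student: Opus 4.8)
We wish to show that $|\Gamma/\Delta|\cdot \mathfrak{c}_{E/F}=0$ in $K_0(\ZZ_p[\Gamma],\QQ_p[\Gamma])$, where $\mathfrak{c}_{E/F}=\delta_{\ZZ_p,\QQ_p,\Gamma}\bigl((1-\psi_{2,*})(y_{E/F})\bigr)$. The plan is to analyse the element $y_{E/F}\in\zeta(\QQ_p[\Gamma])^\times$ component-by-component. By the very definition of the unramified characteristic $y(F,\phi)$, the $\chi$-component $(y_{E/F})_\chi$ depends only on whether $\chi$ is ramified (in which case it equals $1$) or unramified (in which case it equals $(-1)^{\chi(1)}\chi(\sigma)$ for $\sigma$ a Frobenius lift); and $\chi$ is unramified precisely when $\Delta=\ker(\chi|_\Delta)$, i.e. when $\chi$ is inflated from a character of the unramified quotient $\Gamma/\Delta$.

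**Reduction to the unramified quotient.** The key observation is that $(1-\psi_{2,*})(y_{E/F})$ is "supported" on the unramified part: if $\chi$ is ramified then $(y_{E/F})_\chi=1$, and since $\psi_2$ permutes $\widehat\Gamma$ preserving the property of being (un)ramified — indeed $\psi_2(\chi)$ and $\chi$ have the same kernel restricted to $\Delta$, because $\psi_2(\chi)(g)=\chi(g^2)$ and $g\mapsto g^2$ is a bijection on the $p$-group-or-cyclic structure relevant here, or more simply $\ker\psi_2(\chi)\supseteq$ the same inertia behaviour — the element $(1-\psi_{2,*})(y_{E/F})$ lies in the image of the inflation map $\zeta(\QQ_p[\Gamma/\Delta])^\times\to\zeta(\QQ_p[\Gamma])^\times$. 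Writing $\pi\colon\Gamma\to\Gamma/\Delta$ for the quotient and using the compatibility of $\delta$ with inflation (the dual/coinflation analogue of \eqref{Diagram: ind}, or directly the functoriality recalled in Remark~\ref{functorial remark}(ii)), we get that $\mathfrak{c}_{E/F}$ is the image under the coinflation-dual map $K_0(\ZZ_p[\Gamma/\Delta],\QQ_p[\Gamma/\Delta])\to K_0(\ZZ_p[\Gamma],\QQ_p[\Gamma])$ of the corresponding element built from $y_{E^\Delta/F}$, where $E^\Delta/F$ is the maximal unramified subextension.

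**Killing the element over the unramified quotient.** Now $\Gamma/\Delta$ is cyclic of order $m:=|\Gamma/\Delta|$ generated by Frobenius, and $E^\Delta/F$ is unramified. For an unramified extension the twisted unramified characteristic is an explicit element of $K_0(\ZZ_p[\Gamma/\Delta],\QQ_p[\Gamma/\Delta])$ coming from $\delta$ applied to a unit whose $\chi$-components are roots of unity of order dividing $m$ (since $\chi(\sigma)$ is an $m$-th root of unity and $(-1)^{\chi(1)}=\pm1$ with $m$ odd forcing this to be an $m$-th root of unity as well, noting $\chi(1)=1$ here). Hence $(1-\psi_{2,*})(y_{E^\Delta/F})$ has every component an $m$-th root of unity, so its $m$-th power is trivial; since $\delta$ is a homomorphism, $m\cdot\mathfrak{c}_{E^\Delta/F}=0$, and applying the (additive) coinflation-dual map gives $m\cdot\mathfrak{c}_{E/F}=0$, as desired.

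**Main obstacle.** The step I expect to require the most care is the second one: justifying precisely that $(1-\psi_{2,*})(y_{E/F})$ descends through inflation from $\Gamma/\Delta$, which amounts to checking that $\psi_2$ respects the ramified/unramified dichotomy on $\widehat\Gamma$ and that the values $(y_{E/F})_{\psi_2(\chi)}$ for unramified $\chi$ are again governed by Frobenius on $\Gamma/\Delta$ (one needs $\psi_2$ of an inflated character to be the inflation of $\psi_2$ of the quotient character, which is immediate, but one must also confirm $\sigma^2$ remains a Frobenius lift). Once that bookkeeping is in place, the root-of-unity argument and the homomorphism property of $\delta$ finish the proof cleanly; everything else is routine functoriality already recorded in the excerpt.
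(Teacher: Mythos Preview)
Your underlying idea—that every $\chi$-component of $(1-\psi_{2,*})(y_{E/F})$ is an $m$-th root of unity with $m=|\Gamma/\Delta|$, whence its $m$-th power is $1$ and so $m\cdot\mathfrak{c}_{E/F}=\delta_{\ZZ_p,\QQ_p,\Gamma}(1)=0$—is correct and is ultimately what the paper uses. However, the paper's route is considerably shorter: it simply quotes from \cite[Rem.~7.5]{BBH} the closed-form identity
\[
(1-\psi_{2,*})(y_{E/F}) \;=\; (1-e_\Delta) + \sigma^{-1}e_\Delta
\]
in $\zeta(\QQ_p[\Gamma])^\times$, and then computes directly
\[
\bigl((1-e_\Delta)+\sigma^{-1}e_\Delta\bigr)^{m}=(1-e_\Delta)+(\sigma^{-1}e_\Delta)^{m}=(1-e_\Delta)+e_\Delta=1,
\]
which finishes the proof in one line. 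No passage to $\Gamma/\Delta$, no functoriality, and no case analysis on ramified versus unramified $\chi$ is needed once this formula is in hand.

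Your detour through the quotient, by contrast, has a couple of genuine wrinkles. First, $\psi_2$ does \emph{not} permute $\widehat\Gamma$: for $\chi$ of degree $>1$ the virtual character $\psi_2(\chi)=\mathrm{Sym}^2\chi-\wedge^2\chi$ typically has several constituents, some with negative multiplicity, so your sentence ``$\psi_2$ permutes $\widehat\Gamma$ preserving the property of being (un)ramified'' is not literally true and the subsequent claim that $(1-\psi_{2,*})(y_{E/F})$ lies in the image of inflation from $\zeta(\QQ_p[\Gamma/\Delta])^\times$ is not justified by that reasoning (it \emph{is} true, as the explicit formula above shows, but your argument does not establish it). Second, the ``coinflation-dual map $K_0(\ZZ_p[\Gamma/\Delta],\QQ_p[\Gamma/\Delta])\to K_0(\ZZ_p[\Gamma],\QQ_p[\Gamma])$'' you invoke does not exist in the form you want: the functoriality in Remark~\ref{functorial remark}(ii) runs in the opposite direction (from $\Gamma$ to the quotient) and is stated only for the global elements. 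If you wish to avoid citing the closed-form identity, the clean fix is to drop the quotient step entirely and argue directly that for every $\chi\in\widehat\Gamma$ the quantity $y(F,\chi)\,y(F,\psi_2(\chi))^{-1}$ is an $m$-th root of unity: the only nontrivial contributions to $y(F,\cdot)$ come from unramified constituents, these are one-dimensional (as $\Gamma/\Delta$ is cyclic), and their values at $\sigma$ are $m$-th roots of unity; this handles both the ramified and unramified cases at once and yields the same conclusion without the problematic intermediaries.
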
  

\begin{proof} Fix an element $\sigma$ of $\Gamma$ that projects to give the Frobenius automorphism in $\Gamma/\Delta$. 
	
By \cite[Rem. 7.5]{BBH}, the element $\mathfrak{c}_{E/F}$ is equal to the image under $\delta_{\ZZ_p,\QQ_p,\Gamma}$ of the element
\begin{equation}\label{Eq: frak c exp}
(1 - \psi_{2, *})(y_{E/F}) = (1-e_\Delta) + \sigma^{-1} e_\Delta \in \zeta(\QQ_p[\Gamma])^\times.
\end{equation}
Then, the claimed result is true since, 
\[ \left((1-e_\Delta) + \sigma^{-1} e_\Delta\right)^{|\Gamma/\Delta|} = (1-e_\Delta) + (\sigma^{-1} e_\Delta)^{|\Gamma/\Delta|} = (1-e_\Delta) + e_\Delta = 1.\]
\end{proof}

In the next result, we write $B$ for the maximal unramified extension of $F$ in $E$ (so that $\Delta = \Gal(E/B)$). We also recall from \cite[Chap. I, \S4 and Chap. III, \S3, (3.1)]{F83} that the \textit{resolvent} element and the (local) \emph{norm resolvent} is defined by setting, for each element $a$ in $E$ generating a normal basis of $E/F$, and each character $\chi$ of representation $T_\chi : \Gamma \rightarrow {\rm GL}_{n}(\QQ_p^c)$,
\begin{equation*}\label{Eq: resolvent def}
(a|\chi):=\mathrm{det}(\sum_{g\in \Gamma} g(a) T_{\chi}(g^{-1})),  \quad \quad \mathcal{N}_{F/\mathbb{Q}_\ell}(a|\chi)= \prod_{\omega}(a|\chi^{\omega^{-1}})^{\omega}, 
\end{equation*}
where for the second product $\omega$ runs through a transversal of $\Omega_{F}$ in $\Omega_{\mathbb{Q}_{p}}$.

\begin{prop}\label{key prop ?} 
In $K_0(\ZZ_p[\Gamma],\QQ_p[\Gamma])$, one has $|\Gamma/\Delta|\cdot \mathfrak{a}_{E/F} = {\rm i}^{\Gamma, *}_{\Delta, \mathbb{Q}_p}(\mathfrak{a}_{E/B}).
$\end{prop}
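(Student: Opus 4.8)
The plan is to reduce the claimed identity $|\Gamma/\Delta|\cdot \mathfrak{a}_{E/F} = {\rm i}^{\Gamma, *}_{\Delta, \mathbb{Q}_p}(\mathfrak{a}_{E/B})$ to a computation with the three pieces that make up $\mathfrak{a}_{E/F}$ in Definition \ref{local a def}, namely $\Delta(\mathcal{A}_{E/F})$, the $\delta$-image of (the embedding into $\QQ_p^c$ of) $T^{(2)}_{E/F}$, and Breuning's unramified element $U_{E/F}$. Since $B/F$ is unramified, the extension $E/F$ and its inertial part $E/B$ share the same ramification data, and the key structural fact is that $\mathcal{A}_{E/F}$ and $\mathcal{A}_{E/B}$ \emph{coincide} as fractional ideals of $E$ (the different $\mathcal{D}_{E/F}$ equals $\mathcal{D}_{E/B}$ because $B/F$ is unramified), so that $\mathcal{A}_{E/F}$ is already an $\co_E$-module with compatible $\Gamma$- and $\Delta$-structures. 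First I would record this and use it to compare $\Delta(\mathcal{A}_{E/F})\in K_0(\ZZ_p[\Gamma],\QQ_p^c[\Gamma])$ with ${\rm i}^{\Gamma,*}_{\Delta,\QQ_p}(\Delta(\mathcal{A}_{E/B}))$: induction from $\Delta$ to $\Gamma$ corresponds, on the level of the lattices $H_E$, to the fact that $\Sigma(E)$ as a $\Gamma$-set is induced from $\Sigma(E)$ as a $\Delta$-set (via the $|\Gamma/\Delta|$-fold choice of coset representatives coming from $\Gal(B/F)$). I expect the difference $\Delta(\mathcal{A}_{E/F}) - {\rm i}^{\Gamma,*}_{\Delta,\QQ_p}(\Delta(\mathcal{A}_{E/B}))$ to be expressible via $\delta_{\ZZ_p,\QQ_p^c,\Gamma}$ applied to an explicit element of $\zeta(\QQ_p^c[\Gamma])^\times$ built from norm resolvents, exactly the sort of computation carried out in \cite{BBH} and in Fröhlich's book; this is where the resolvent and norm resolvent notation introduced just before the statement will be used.

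Next I would treat the Galois-Gauss sum term. Using the inductivity of Artin $L$-functions and the behaviour of local Galois-Gauss sums under induction (the relation $\tau(F,\mathrm{ind}^\Gamma_\Delta\varphi) = \tau(B,\varphi)\cdot(\text{correction involving the discriminant/norm resolvent of }B/F)$, see \cite[Chap. I, \S5 and Chap. IV]{F83}), together with the compatibility \eqref{Diagram: ind} of $\delta$ with induction via $\mathrm{\tilde i}_\Delta^\Gamma$, I would show that $\delta_{\ZZ_p,\QQ_p^c,\Gamma}(j_p^c(T^{(2)}_{E/F}))$ and ${\rm i}^{\Gamma,*}_{\Delta,\QQ_p}(\delta_{\ZZ_p,\QQ_p^c,\Delta}(j_p^c(T^{(2)}_{E/B})))$ differ by the $\delta$-image of a term that again is controlled by $B/F$ being unramified; here the Adams-operator twist $(\psi_{2,*}-1)$ and the modification by $y'_{E/F}$ interact cleanly because $y(F,\chi)$ is insensitive to the inertial versus full picture in the appropriate sense. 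For Breuning's element $U_{E/F}$, I would invoke its known functoriality under induction from the inertia subgroup (this is part of the formalism of \cite{B04}, and is exactly the unramified correction that makes $\mathfrak{a}_{E/F}$ well-defined): $U_{E/F} = {\rm i}^{\Gamma,*}_{\Delta,\QQ_p}(U_{E/B}) + (\text{unramified discriminant term})$. Assembling the three comparisons, all the ``correction'' terms coming from $B/F$ should cancel against each other — this cancellation is forced by the fact that $\mathfrak{a}_{E/F}$ was designed in \cite{BBH} to be an \emph{unramified}-correction-free invariant — leaving $\mathfrak{a}_{E/F} - {\rm i}^{\Gamma,*}_{\Delta,\QQ_p}(\mathfrak{a}_{E/B})$ equal to the $\delta$-image of a single element $w\in\zeta(\QQ_p[\Gamma])^\times$ supported on the unramified characters, i.e. on the idempotents $e_\chi$ with $\Delta\subseteq\ker\chi$, of the shape $(1-e_\Delta) + u\, e_\Delta$ for a root of unity $u$ of order dividing $|\Gamma/\Delta|$ (compare the completely analogous computation in the proof of Lemma \ref{easy part}).

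Finally I would conclude exactly as in Lemma \ref{easy part}: any element of $\zeta(\QQ_p[\Gamma])^\times$ of the form $(1-e_\Delta)+u\,e_\Delta$ with $u^{|\Gamma/\Delta|}=1$ satisfies $\bigl((1-e_\Delta)+u\,e_\Delta\bigr)^{|\Gamma/\Delta|}=1$, hence has $\delta$-image annihilated by $|\Gamma/\Delta|$; multiplying the displayed difference by $|\Gamma/\Delta|$ therefore kills it, which is precisely the asserted equality $|\Gamma/\Delta|\cdot\mathfrak{a}_{E/F} = {\rm i}^{\Gamma,*}_{\Delta,\QQ_p}(\mathfrak{a}_{E/B})$ in $K_0(\ZZ_p[\Gamma],\QQ_p[\Gamma])$. (We also need the fact, already recorded after Definition \ref{local a def}, that $\mathfrak{a}_{E/F}$ lies in the $\QQ_p$-coefficient subgroup, so that the whole argument takes place in $K_0(\ZZ_p[\Gamma],\QQ_p[\Gamma])$ rather than just its $\QQ_p^c$-version.) The main obstacle I anticipate is the bookkeeping in the middle step: pinning down \emph{exactly} which resolvent/discriminant correction terms appear for $\Delta(\mathcal{A}_{E/F})$, for the Galois-Gauss sum term, and for $U_{E/F}$, and verifying that they cancel in pairs so that only the harmless $(1-e_\Delta)+u\,e_\Delta$ survives — this requires carefully tracking the definitions in \cite{BBH}, \cite{B04} and \cite{F83} and is the part where sign and normalisation errors are easiest to make; everything after that cancellation is the soft $K$-theoretic argument copied from Lemma \ref{easy part}.
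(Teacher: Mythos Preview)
Your proposal contains a basic misreading of the statement in its final step. You set out to show that $|\Gamma/\Delta|$ annihilates the \emph{difference} $\mathfrak{a}_{E/F} - {\rm i}^{\Gamma,*}_{\Delta,\QQ_p}(\mathfrak{a}_{E/B})$, and then assert that this ``is precisely the asserted equality.'' It is not: annihilating the difference yields $|\Gamma/\Delta|\cdot\mathfrak{a}_{E/F} = |\Gamma/\Delta|\cdot{\rm i}^{\Gamma,*}_{\Delta,\QQ_p}(\mathfrak{a}_{E/B})$, whereas the proposition asserts $|\Gamma/\Delta|\cdot\mathfrak{a}_{E/F} = {\rm i}^{\Gamma,*}_{\Delta,\QQ_p}(\mathfrak{a}_{E/B})$ with \emph{no} multiplier on the right-hand side. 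The second statement does not follow from the first, so even if every intermediate step were correct, your argument would not prove the proposition.

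Independently of that, the claim that the difference is $\delta$ of something of the shape $(1-e_\Delta)+u\,e_\Delta$ is not substantiated, and the hoped-for ``cancellation in pairs'' does not occur at the level of $\mathfrak{a}_{E/F}$ itself. The relevant identities linking Galois--Gauss sums and norm resolvents over $F$ to those over $B$ (Fr\"ohlich \cite[Th.~25 and following Remark]{F83}, extended to the wildly ramified case by Erez \cite[\S6]{E}) hold only \emph{after} raising to the $[B:F]$-th power: one has $\tau(F,\psi_2(\chi)-\chi)^{[B:F]} = \tau(B,\mathrm{res}^\Gamma_\Delta(\psi_2(\chi)-\chi))$ and $(\mathcal{N}_{F/\QQ_p}(a_F|\chi))^{[B:F]} = \mathcal{N}_{B/\QQ_p}(a_B|\mathrm{res}^\Gamma_\Delta\chi)\cdot{\rm Det}_\chi(\lambda)$ for some $\lambda\in\ZZ_p[\Gamma]^\times$, with no analogous relation before taking powers. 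Accordingly, the paper multiplies $\mathfrak{a}_{E/F}$ by $|\Gamma/\Delta|$ \emph{first} and only then compares with the induced element. To carry this out it passes, via Taylor's Fixed Point Theorem, to $K_0(\mathcal{O}_p^t[\Gamma],\QQ_p^c[\Gamma])$ (this map is injective on $K_0(\ZZ_p[\Gamma],\QQ_p[\Gamma])$); there the Breuning terms $U_{E/F}$, $U_{E/B}$ vanish outright by \cite[Prop.~4.4]{B_phd}, and the remaining resolvent and Gauss-sum pieces are compared directly using Breuning's explicit formula \cite[Lem.~4.16]{B_phd} together with the $[B:F]$-th power identities above, showing that the two sides differ by an element of ${\rm Nrd}_{\QQ_p^c[\Gamma]}(\ZZ_p[\Gamma]^\times)$. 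No residual term of the form $(1-e_\Delta)+u\,e_\Delta$ ever enters.
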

\begin{proof} By Taylor's Fixed Point Theorem (cf. \cite[Chap. 8, \S1]{M84}), one knows that the restriction to $K_0(\ZZ_p[\Gamma], \QQ_p[\Gamma])$ of the homomorphism $j^{t}_{p, *}$ (defined in \eqref{Eq: Taylor map def0}) is injective, so it is enough to prove that the images in $K_{0}(\mathcal{O}_{p}^{t}[\Gamma], \mathbb{Q}^{c}_p[\Gamma])$ of the respective elements 
$|\Gamma/\Delta|\cdot \mathfrak{a}_{E/F}$ and ${\rm i}^{\Gamma, *}_{\Delta, \mathbb{Q}_p}(\mathfrak{a}_{E/B})$ coincide. 
	
For $L \in \{B,F\}$, with $\Gamma_L := \Gal(E/L)$, we set $\delta_{\Gamma_L,p} := \delta_{\ZZ_p,\QQ_p^c,\Gamma_L}$ and 
\[ \mathfrak{a}'_{E/L}:= \Delta(\mathcal{A}_{E/L}) - \delta_{\Gamma_L,p}(j^c_{p}(T^{(2)}_{E/L}))
\in K_{0}(\ZZ_p[\Gamma_L], \mathbb{Q}^{c}_p[\Gamma_L]).\]
Then, the result \cite[Prop. 4.4]{B_phd} of Breuning that $U_{E/L} \in \ker(j_{p, *}^{t})$ implies that the images in $K_{0}(\mathcal{O}^t_p[\Gamma_L], \mathbb{Q}^{c}_p[\Gamma_L])$ of the elements $\mathfrak{a}_{E/L}$ and $\mathfrak{a}'_{E/L}$ coincide.
It is therefore enough for us to prove that the images in $K_{0}(\mathcal{O}_{p}^{t}[\Gamma], \mathbb{Q}^{c}_p[\Gamma])$ of the elements $|\Gamma/\Delta|\cdot \mathfrak{a}'_{E/F}$ and ${\rm i}^{\Gamma, *}_{\Delta, \mathbb{Q}_p}(\mathfrak{a}'_{E/B})$ coincide.

To do this, we fix a $\mathbb{Z}_p$-basis $\{a_\sigma \}_{\Sigma(L)}$ of $\mathcal{O}_L$ and set 
\begin{equation}\label{delta def}
\delta_{L}:=\det(\tau(a_\sigma) )_{ \tau, \sigma \in \Sigma(L)} \in \QQ_p^c
\end{equation}
and $u_L := \delta_L/j_p^c(\tau_L)$. We also  fix an element $a_L$ of $E$ such that $\mathcal{A}_{E/L}= \mathcal{O}_{L}[\Gamma_L]\cdot a_L$.
Then, by the explicit formula \cite[Lem. 4.16]{B_phd} of Breuning, one has 
\begin{align*} \mathfrak{a}'_{E/L} =&\, \delta_{\Gamma_L,p}\left( 
(j^c_{p}(T^{(2)}_{E/L}))^{-1}\sum_{\chi\in \widehat{\Gamma}_L} (\delta_{L}^{\chi(1)} \cdot \mathcal{N}_{L/\mathbb{Q}_{p}}(a_L|\chi)) e_{\chi}\right)\\
=&\, \delta_{\Gamma_L,p}\left( \sum_{\chi\in \widehat{\Gamma}_L}\left((\delta_{L}/ j_p^c(\tau_L))^{\chi(1)} \cdot \frac{\mathcal{N}_{L/\mathbb{Q}_{p}}(a_L|\chi)}{j^c_{p}(\tau(L,\psi_2(\chi)-\chi) y(L,\chi - \psi_2(\chi)))}\right) e_{\chi}\right)\\
=&\, \delta_{\Gamma_L,p}\bigl( x_L\bigr) + \delta_{\Gamma_L,p}\left(\sum_{\chi\in \widehat{\Gamma}_L}\frac{\mathcal{N}_{L/\mathbb{Q}_{p}}(a_L|\chi)}{j^c_{p}(\tau(L,\psi_2(\chi)-\chi)\cdot y(L, \chi - \psi_2(\chi)))} e_{\chi}\right), \end{align*}
with $x_L := \sum_{\chi\in \widehat{\Gamma}_L} u_L^{\chi(1)}e_\chi = {\rm Nrd}_{\QQ_p^c[\Gamma_L]}(u_L)$. 
	
In the sequel, we omit the occurrence of $j^c_{p}$ in each of our notations.
	
Now, since $u_L$ is a unit of $\mathcal{O}_p^t$ (by \cite[Lem. 4.29]{B_phd}), the image in $K_{0}(\mathcal{O}_{p}^{t}[\Gamma_L], \mathbb{Q}^{c}_p[\Gamma_L])$ of $\delta_{\Gamma_L,p}(x_L)$ vanishes. 
	
In addition, one has $y(B, \chi - \psi_2(\chi)) = 1$ for every $\chi \in \widehat \Delta$ since the extension $E/B$ is totally ramified (and so there is no non-trivial unramified character). One can also compute (directly from Definition~\ref{Def: equiv global GGS}(ii)) that $y(F, \chi -\psi_2(\chi))^{|\Gamma/\Delta|} = 1$ for every $\chi \in \widehat \Gamma$.

In view of \eqref{Diagram: ind}, one can write 
\begin{align}\label{ind formula}
{\rm i}^{\Gamma, *}_{\Delta, \mathbb{Q}_p} \circ   \delta_{\Delta ,p}
\left(\sum_{\phi\in \widehat{\Delta}}\frac{\mathcal{N}_{B/\mathbb{Q}_{p}}(a_B|\phi)}{j^c_{p}(\tau(B,\psi_2(\phi)-\phi))} e_{\phi}\right)
=\, & \delta_{\Gamma, p} \circ \tilde{\rm i}^{\Gamma}_{\Delta} 
\left(\sum_{\phi\in \widehat{\Delta}}\frac{\mathcal{N}_{B/\mathbb{Q}_{p}}(a_B|\phi)}{j^c_{p}(\tau(B,\psi_2(\phi)-\phi))} e_{\phi}\right)  \nonumber \\ 
=\, & \delta_{\Gamma, p} 
\left(\sum_{\chi\in \widehat{\Gamma}}
\frac{\mathcal{N}_{B/\mathbb{Q}_{p}}(a_B|{\rm res}^\Gamma_\Delta\chi)}{\tau(B,{\rm res}^\Gamma_\Delta(\psi_2(\chi)-\chi))} e_{\chi}
\right),
\end{align}
where the second equality follows from the definition of the induction map \eqref{Eq: center ind def} and the fact that $\tau(B, \psi_2({\rm res}^\Gamma_\Delta \chi)-{\rm res}^\Gamma_\Delta\chi) = \tau(B, {\rm res}^\Gamma_\Delta(\psi_2(\chi)-\chi)) $ (cf. \cite[Prop.-Def. 3.5]{E}).

In order to show that the images in $K_{0}(\mathcal{O}_{p}^{t}[\Gamma], \mathbb{Q}^{c}_p[\Gamma])$ of the elements $|\Gamma/\Delta|\cdot \mathfrak{a}'_{E/F}$ and ${\rm i}^{\Gamma, *}_{\Delta, \mathbb{Q}_p}(\mathfrak{a}'_{E/B})$ coincide, it is therefore enough for us to show that the elements 
\begin{equation}\label{p group ind}
\sum_{\chi\in \widehat{\Gamma}}\frac{(\mathcal{N}_{F/\mathbb{Q}_{p}}(a_F|\chi))^{[B:F]}}{\tau(F,\psi_2(\chi)-\chi)^{[B:F]}} e_{\chi} \quad \text{ and } \quad \sum_{\chi\in \widehat{\Gamma}}\frac{\mathcal{N}_{B/\mathbb{Q}_{p}}(a_B|{\rm res}^\Gamma_\Delta\chi)}{\tau(B,{\rm res}^\Gamma_\Delta(\psi_2(\chi)-\chi))} e_{\chi}
\end{equation}
of $\zeta(\QQ_p^c[\Gamma])^\times$ differ by an element of ${\rm Nrd}_{\QQ^c_p[\Gamma]}(\ZZ_p[\Gamma]^\times)$. 
	
To prove this, we note that,
by the argument of \cite[Th. 25 and the following Remark]{F83} (which is valid even for extensions that are wildly ramified, as observed by Erez in \cite[\S6]{E}), for each $\chi$ in $\widehat\Gamma$, one has 
\[ \tau(F,\psi_2(\chi)-\chi)^{[B:F]} = \tau(B, {\rm res}^\Gamma_\Delta(\psi_2(\chi)-\chi)), \]
since $\psi_2(\chi)-\chi$ has degree zero, and for a suitable choice of generating element $a_B$, one also has 
\[ (\mathcal{N}_{F/\mathbb{Q}_{p}}(a_F|\chi))^{[B:F]} = \mathcal{N}_{B/\mathbb{Q}_{p}}(a_B|{\rm res}^{\Gamma}_{\Delta} \chi)\cdot {\rm Det}_\chi(\lambda),\]
where the element $\lambda$ of $\ZZ_p[\Gamma]^\times$ is independent of $\chi$. This completes the proof of the claimed result.
\end{proof} 

We are now ready to prove Proposition~\ref{p power thm1 pre prop}.

For $v$ and $w$ as in (\ref{reduced equalities}), we now write $I_w$ for the inertia subgroup of $G_w$ and $B_w$ for the fixed field of $I_w$ in $L_w$. 

Then, in view of the last result (for $E/F = L_w/K_v$) and the obvious equality $|G_w|/p = |I_w|/p \times |G_w/I_w|$, 
to prove the first equality in (\ref{reduced equalities}), it is therefore enough for us to show that the element $ \mathfrak{a}_{L_w/B_w}$ of $K_0(\ZZ_p[I_w],\QQ_p[I_w])$ is annihilated by $|I_w|/p$. 

To do this, we write $L_0$ for the fixed field of $I_w$ in $L$ and $w_0$ for the place of $L_0$ obtained by restricting $w$. Then, since $p$ is unramified in $K$, the extension $L_{0,w_0}/\QQ_p$ is unramified and so $L_0$ cannot contain a non-trivial $p$-th root of unity. By the result \cite[Cor. 2, p. 156]{neuk solvable} of Neukirch, we can therefore fix a finite Galois extension $L'$ of $L_0$ with both of the following properties:
\begin{itemize}
\item[(i)] $L'$ has a unique place $w'$ above $w_0$ and the completion $L'_{w'}/L_{0,w_0}$ is isomorphic to $L_w/B_w$;
\item[(ii)] if $v'$ is any place of $L_0$ which divides $|I_w|$, and $v'\not= w_0$, then $v'$ is totally split in $L'/L_0$.
\end{itemize}

These conditions imply that the extension $L'/L_0$ is weakly ramified and that the group $G' := \Gal(L'/L_0)$ identifies with $\Gal(L'_{w'}/L_{0,w_0}) \cong \Gal(L_w/B_w) = I_w.$ In particular, since $w_0$ is the only place of $L_0$ that ramifies wildly in $L'$, in this case the decomposition result in \eqref{Eq: frak a decomp} combines with the vanishing of $\mathfrak{a}_{L'/L_0}$ for tamely ramified extensions of $\ell$-adic fields (see Remark \ref{tame local remark} with $L/K = L'/L_0$) to imply that  
\[  \mathfrak{a}_{L'/L_0} = \mathfrak{a}_{L'_{w'}/L_{0,w_0}} = \mathfrak{a}_{L_w/B_w}.\]

It is therefore sufficient for us to show that $\mathfrak{a}_{L'/L_0}$ is annihilated by $|G'|/p$. To do this we note that Proposition \ref{Th: BBH thm}(i) implies that $\mathfrak{a}_{L'/L_0}$ belongs to ${\rm DT}(\ZZ[G'])$. 

In addition, Lemma \ref{Prop: local abelian inertia p-group}(ii) implies that the group $G' (\cong I_w)$ is abelian and so the set $\Xi(G')$ in Theorem \ref{Prop: odd degree tor group order}(ii) is equal to the set of normal subgroups $H$ of $G'$ with the property that $G'/H$ is cyclic (see Remark \ref{abelian case remark}(ii)). 

Now, we fix $H$ in $\Xi(G')$ and set $\widetilde{L'} := (L')^H$ and $\widetilde{G} := G'/H \cong \Gal(\widetilde{L'}/L_0)$. Then, Remark \ref{functorial remark}(ii) implies that $q^{G',*}_{\tilde{G}}(\mathfrak{a}_{L'/L_0}) = \mathfrak{a}_{\widetilde{L'}/L_0}$. In addition, since $\widetilde{G}$ is cyclic of $p$-power order and  $L_{0, w_0 }$ is an unramified extension of $\QQ_p$, Proposition \ref{Th: BBH thm}(ii) implies that $\mathfrak{a}_{\widetilde{L'}/L_0} = \mathfrak{c}_{\widetilde{L'}/L_0}$. Finally, we note that, since  $w_0$ is totally ramified in $\widetilde{L'}$, the expression \eqref{Eq: frak c exp} implies that $\mathfrak{c}_{\widetilde{L'}/L_0}$ vanishes. 

This argument shows that $\mathfrak{a}_{L'/L_0}$ belongs to the kernel of the diagonal map that occurs in Theorem \ref{Prop: odd degree tor group order}(ii) (with $\Gamma = G'$) and so the latter result implies that $\mathfrak{a}_{L'/L_0}$ is annihilated by $|G'|/p$, as required. 

This finishes the proof of Proposition~\ref{p power thm1 pre prop} and hence, Theorem \ref{Thm: p-group main intro}.

\begin{remark}\label{remark above}
Alternatively, one can complete the argument of Proposition~\ref{p power thm1 pre prop} by proving an analogous result to \cite[Th. 6.1]{BBH} of the functoriality properties of the local element defined in \S\ref{S: local elt}. In this way, for any $H\in \Xi(I_w)$ with $L_w':=(L_w)^H$, one has $q^{I_w}_{I_w/H}(\mathfrak{a}_{L_w/B_w})=\mathfrak{a}_{L_w'/B_w}$. Moreover, by \cite[Th. 8.1]{BBH} one has $\mathfrak{a}_{L_w'/B_w}=\mathfrak{c}_{L_w'/B_w}=0$. Hence the result that $\mathfrak{a}_{L_w'/B_w}$ is annihilated by $|I_w|/p$ now follows from Theorem \ref{Prop: odd degree tor group order}(ii) and (iii).
\end{remark}

\section{The results of Vinatier}
If $L/K$ is any extension as in Theorem \ref{Thm: p-group main intro}, the result of Theorem \ref{Thm: p-group main intro} can be combined with the result \cite[Th. 5.2(iv)]{BBH} of Bley, Burns and Hahn to deduce that 
\[ \frac{n(L/K)}{p}\cdot [\mathcal{A}_{L/K}] = 0\]
in ${\rm Cl}(\ZZ[G])$. 

In the special case that $K = \QQ$, Vinatier \cite{V} has given a much better upper bound on the order of $[\mathcal{A}_{L/\QQ}]$, with additional improvements in the case $p=3$ obtained in \cite{V2}. The essential part in proving this bound is that, by working in ${\rm Cl}(\ZZ[G])$, Vinatier is able to prove a `global' analogue of the induction formula in \eqref{p group ind} in which the exponent $|\Gamma/\Delta|$ is replaced by $p$. (We note that the second expression of \eqref{p group ind} is an induction in the sense of \eqref{ind formula}.) 

In the next result, we describe a consequence of Vinatier's approach for the relative element $\Delta(\mathcal{A}_{L/\QQ})$. 

We note that while (as far as we can see) this result does not directly provide evidence in support of Conjecture~\ref{bbh conj}, it does provide further evidence for the general belief that the elements $\Delta(\mathcal{A}_{L/K})$ should be closely controlled by analytic invariants. 

\begin{thm}\label{vinatier lift} 
Fix an odd prime $p$ and a wildly and weakly ramified Galois extension $L$ of $\QQ$ of $p$-power degree and set $G := \Gal(L/\QQ)$. We fix a $p$-adic place $w$ of $L$ and write $e(w)$ for its absolute ramification degree and $H$ for its decomposition subgroup in $G$ and identify the latter subgroup with $\Gal(L_w/\QQ_p)$. Let ${\rm i}^*_w$ denote the induction map $K_0(\ZZ[H],\QQ^c[H]) \to K_0(\ZZ[G],\QQ^c[G])$ and $\delta_w$ for the composite homomorphism $\partial^1_{\ZZ,\QQ^c,H}\circ ({\rm Nrd}_{\QQ^c[H]})^{-1}$. Then the element 
\[ \mathfrak{a}_{L/\QQ} + {\rm i}^*_{w}\bigl(\delta_{w}((\psi_{2,\ast}-1)(\tau'_{L_w/\QQ_p}))\bigr)\]
of $K_0(\ZZ[G],\QQ^c[G])$ is annihilated by $(p-1)e(w)$.   
\end{thm}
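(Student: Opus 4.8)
The plan is to localise at the unique wildly ramified place of $L$, to observe that the correction term exactly cancels the twisted Galois--Gauss sum contribution of that place, and then to import Vinatier's global induction argument, the loss of precision incurred in transcribing it into the relative $K$-group being precisely what the correction term repairs.

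First I would localise. As $G$ has $p$-power order, no prime $\ell\neq p$ can ramify wildly in $L$, and $\QQ$ has a single place above $p$; so by the decomposition \eqref{Eq: frak a decomp}, the vanishing of the local elements at tame places (Remark \ref{tame local remark}), and the compatibility of the induction maps with the localisation isomorphism \eqref{Eq: KT iso K0}, one has $\mathfrak{a}_{L/\QQ} = {\rm i}^{*}_{w}(\widehat{\mathfrak{a}}_{w})$, where, after identifying $H$ with $\Gal(L_w/\QQ_p)$, $\widehat{\mathfrak{a}}_{w}\in K_0(\ZZ[H],\QQ^c[H])$ is the element with localisation $\mathfrak{a}_{L_w/\QQ_p}$ at $p$ and trivial localisations elsewhere. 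Now, by Definition \ref{local a def}, $T^{(2)}_{L_w/\QQ_p} = \tau^{H}_{\QQ_p}\cdot(\psi_{2,\ast}-1)(\tau'_{L_w/\QQ_p})$, and $\tau_{\QQ_p} = \tau(\QQ_p,\mathbf{1}_{\QQ_p}) = 1$, so $\tau^{H}_{\QQ_p}$ is trivial; since $\delta_{w} = \delta_{\ZZ,\QQ^c,H}$ is a homomorphism this yields
\[ \mathfrak{a}_{L/\QQ} + {\rm i}^{*}_{w}\bigl(\delta_{w}((\psi_{2,\ast}-1)(\tau'_{L_w/\QQ_p}))\bigr) = {\rm i}^{*}_{w}(\mathfrak{b}_{w}), \]
with $\mathfrak{b}_{w}$ the element of $K_0(\ZZ[H],\QQ^c[H])$ whose localisation at $p$ is $\Delta(\mathcal{A}_{L_w/\QQ_p})-U_{L_w/\QQ_p}$ and whose other localisations vanish — the latter because $(\psi_{2,\ast}-1)(\tau'_{L_w/\QQ_p})$ is a degree-zero twist of the modified local Galois--Gauss sum of $\QQ_p$, hence has $\ell$-adic unit components for every $\ell\neq p$, and $\ZZ_\ell[H]$ is then a maximal order. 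Since $\mathcal{A}_{L_w/\QQ_p}$ and $H_{L_w}$ are free $\ZZ_p[H]$-modules of equal rank and $\partial^{0}\circ\delta = 0$, we get $\mathfrak{b}_{w}\in\ker(\partial^{0}_{\ZZ,\QQ^c,H}) = \im(\partial^{1}_{\ZZ,\QQ^c,H})$, say $\mathfrak{b}_{w} = \delta_{\ZZ,\QQ^c,H}(z)$; by \eqref{Diagram: ind}, ${\rm i}^{*}_{w}(\mathfrak{b}_{w}) = \delta_{\ZZ,\QQ^c,G}(\widetilde{\rm i}^{G}_{H}(z))$, so it would suffice to show $\widetilde{\rm i}^{G}_{H}(z)^{(p-1)e(w)}\in\ker\delta_{\ZZ,\QQ^c,G}$.

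The core of the argument is then to pin down $z$ modulo $\ker\delta_{\ZZ,\QQ^c,H}$ and bound its induction. Breuning's explicit resolvent formula (\cite[Lem. 4.16]{B_phd}, used in the proof of Proposition \ref{key prop ?}) lets me take for $z$, modulo $\ker\delta_{\ZZ,\QQ^c,H}$, the tuple built from the resolvents $(a|\chi)$ of a $\ZZ_p[H]$-generator $a$ of $\mathcal{A}_{L_w/\QQ_p}$ and the local Galois--Gauss sums $\tau(\QQ_p,\psi_{2}(\chi)-\chi)$. The decisive input is the global form of the induction identity \eqref{p group ind} proved by Vinatier in \cite{V}: Proposition \ref{key prop ?} establishes it only with the exponent $|\Gamma/\Delta| = f(w)$ (the residue degree of $w$), whereas Vinatier's method, which works in ${\rm Cl}(\ZZ[G])$, gives it with $f(w)$ replaced by $p$. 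Transcribed into $K_0(\ZZ[G],\QQ^c[G])$ this holds only up to an element of $\ker(\partial^{0}_{\ZZ,\QQ^c,G})$, and I would carry out a careful bookkeeping of the Galois--Gauss and resolvent factors discarded on passing to the class group in order to identify that ambiguity, modulo $\ker\delta_{\ZZ,\QQ^c,G}$, with ${\rm i}^{*}_{w}(\delta_{w}((\psi_{2,\ast}-1)(\tau'_{L_w/\QQ_p})))$ — precisely the correction term of the statement. Combined with Proposition \ref{key prop ?} applied to $L_w/\QQ_p$, this should produce, in $K_0(\ZZ[G],\QQ^c[G])$,
\[ p\cdot\Bigl(\mathfrak{a}_{L/\QQ} + {\rm i}^{*}_{w}\bigl(\delta_{w}((\psi_{2,\ast}-1)(\tau'_{L_w/\QQ_p}))\bigr)\Bigr) = {\rm i}^{*}_{I_w}(\mathfrak{a}_{L_w/B_w}), \]
where $I_w$ is the inertia subgroup of $H$, $B_w = L_w^{I_w}$, and ${\rm i}^{*}_{I_w}$ is induction from $I_w$ to $G$ followed by localisation at $p$.

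It then remains to bound $\mathfrak{a}_{L_w/B_w}$. The extension $L_w/B_w$ is totally and weakly ramified, and its group $I_w$, being a $p$-group, is by Lemma \ref{Prop: local abelian inertia p-group} abelian of exponent $p$ (indeed $I_w = \Gamma_1$), with $|I_w| = e(w)$. Realising $L_w/B_w$ as the completion of a suitable global weakly ramified extension with group $I_w$ — exactly as in the proof of Proposition \ref{p power thm1 pre prop} — and applying Proposition \ref{Th: BBH thm}(i) shows that $\mathfrak{a}_{L_w/B_w}$ lies in ${\rm DT}(\ZZ[I_w]) = {\rm DT}(\ZZ_p[I_w])$, so it is annihilated by $(1-1/p)\,|I_w| = (p-1)e(w)/p$ by Theorem \ref{Prop: odd degree tor group order}(i) and (iii). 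Applying ${\rm i}^{*}_{I_w}$ to this and combining with the last displayed identity gives $(p-1)e(w)\cdot\bigl(\mathfrak{a}_{L/\QQ} + {\rm i}^{*}_{w}(\delta_{w}((\psi_{2,\ast}-1)(\tau'_{L_w/\QQ_p})))\bigr) = 0$, as required.

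The main obstacle will be the transcription step: faithfully transporting Vinatier's resolvent-theoretic global induction argument from \cite{V}, which is framed in the Hom-description of ${\rm Cl}(\ZZ[G])$, into the relative $K$-group, and checking that the information sacrificed on passing to the class group is recovered exactly by the stated correction term (equivalently, establishing the penultimate display with $f(w)$ replaced by $p$). Subsidiary technical points are the claim that $\delta_{w}((\psi_{2,\ast}-1)(\tau'_{L_w/\QQ_p}))$ is supported at $p$ and the systematic tracking of Breuning's unramified element $U$.
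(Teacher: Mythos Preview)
Your reduction to $p\cdot\xi = p\cdot{\rm i}^*_{w,p}\bigl(\Delta(\mathcal{A}_{L_w/\QQ_p}) - U_{L_w/\QQ_p}\bigr)$ is correct and coincides with the paper's computation (\ref{reduction}). The genuine gap is the displayed identity
\[ p\cdot\xi \;=\; {\rm i}^*_{I_w}(\mathfrak{a}_{L_w/B_w}), \]
which is the entire weight of your argument and which you yourself flag as ``the main obstacle''. Proposition~\ref{key prop ?} gives ${\rm i}^{H,*}_{I_w}(\mathfrak{a}_{L_w/B_w}) = f(w)\cdot\mathfrak{a}_{L_w/\QQ_p}$, so your identity is equivalent to $p\cdot{\rm i}^*_w(\delta_w(\widetilde\tau_w)) = (f(w)-p)\cdot\mathfrak{a}_{L/\QQ}$, a very specific relation for which no mechanism is offered. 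Vinatier's improvement of the exponent from $f(w)$ to $p$ in \cite{V} takes place in the Hom-description of ${\rm Cl}(\ZZ[G])$ and uses, in an essential way, that the kernel group $D(\ZZ[G])$ is a $p$-group; Remark~\ref{fin remark} records precisely that this structural fact has no analogue for ${\rm DT}(\ZZ[G])$, so there is no reason to expect the ``careful bookkeeping'' to terminate with the correction term rather than with some uncontrolled element of $\ker\partial^0$. Indeed, were your identity true, combining it with the bound $\tfrac{e(w)}{p}\cdot\mathfrak{a}_{L_w/B_w}=0$ obtained in the proof of Proposition~\ref{p power thm1 pre prop} (via Theorem~\ref{Prop: odd degree tor group order}(ii)) would yield $e(w)\cdot\xi=0$, strictly stronger than what is being proved; this is a warning sign.

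The paper's route is entirely different and never passes through an induction from $I_w$. Two concrete inputs from \cite{V} are used directly. First, \cite[Prop.~2.5]{V} shows that $\tilde{\rm i}_w\bigl((\psi_{2,*}-1)(\tau_{L_w/\QQ_p})\bigr)^p$ lands in $\prod_\ell{\rm Nrd}(\mathcal{M}_\ell^\times)$; together with the fact that unramified characteristics are roots of unity this gives $p\cdot\xi\in{\rm DT}(\ZZ_p[G])$, and then Theorem~\ref{Prop: odd degree tor group order}(i) already yields $(p-1)|G|\cdot\xi=0$. Second, to sharpen $|G|$ to $e(w)$, one passes via Taylor's Fixed Point Theorem to $K_0(\mathcal{O}^t_p[G],\QQ^c_p[G])$, where $U_{L_w/\QQ_p}$ dies, and invokes \cite[Prop.~2.7]{V}: for a suitable unramified $E/\QQ_p$ with residue field of order $q$ one has $(a_p\mid\chi)^{(q-1)e(w)}\in{\rm Det}_\chi(\mathcal{O}_E[H]^\times)$ for all $\chi$, forcing the image of $e(w)\cdot\Delta(\mathcal{A}_{L_w/\QQ_p})$ in $K_0(\mathcal{O}^t_p[H],\QQ^c_p[H])$ to have prime-to-$p$ order. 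The fixed-point theorem then gives $(p-1)e(w)\cdot\xi=0$. Neither the induction-to-$I_w$ step nor the globalisation of $L_w/B_w$ enters at all.
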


\begin{proof} 
We set $\delta := \delta_{\ZZ,\QQ^c,G}$, $\widetilde \tau_w := (\psi_{2,\ast}-1)(\tau'_{L_w/\QQ_p})$ and abbreviate the map $\mathrm{\tilde{i}}_{H}^{G}: \zeta(\QQ^c[H])^\times \to \zeta(\QQ^c[G])^\times$ to $\tilde {\rm i}_w$. Then, since ${\rm i}^*_{w}\circ \delta_w = \delta\circ \tilde {\rm i}_w$ (see (\ref{Diagram: ind})), the stated element in the claim is equal to 
\[ \xi := \mathfrak{a}_{L/\QQ} + \delta(\tilde {\rm i}_w(\widetilde \tau_w)).\]

First, we note that the horizontal isomorphism in \cite[Chap. II, \S1, Lem. 1.6]{F83} implies that, in terms of the decomposition \eqref{Eq: element in centre}, for every finite prime $\ell$, $a\in \mathrm{GL}_n(\QQ_\ell^c[G])$ and $\chi$ in $\widehat{G}$, one has that $\mathrm{Nrd}_{\QQ_\ell^c[G]}(a)_{\chi} = \Det_{\chi}(a)$, where $\Det$ denotes the generalized determinant discussed in \cite[Chap. I, \S2]{F83}. In this way $\mathrm{Nrd}_{\QQ_\ell^c[G]}(a)$ can be considered as $\Det(a): \chi \mapsto \Det_{\chi}(a)$, an element of $\Hom(R_{G}, \QQ^{c\times}_\ell)$. Write $\mathcal{M}$ for the maximal order in $\QQ[G]$ that contains $\ZZ[G]$, we then deduce from \cite[Prop. 2.2, pp. 23]{F83} that,
\[ \prod_\ell {\rm Nrd}_{\QQ_\ell[G]}(\mathcal{M}_\ell^{\times}) ={\rm Hom}(R_{G}, U_f(\QQ^c))^{\Omega_{\mathbb{Q}}}, \]
where the product runs over all finite primes $\ell$ and $U_f(\QQ^c)$ denotes the group of unit ideles (i.e. the ideles $u$ whose $\ell$-components are units for all finite primes $\ell$). 
	
Then, the fact that the values of the unramified characteristic are roots of unity in $\QQ^c$ (cf. \cite[Th. 29(i)]{F83}) and the definition of the induction map \eqref{Eq: center ind def} combine to imply that $\tilde {\rm i}_w((\psi_{2,\ast}-1)(y_{L_w/\QQ_p}^{-1}))$ belongs to $\prod_\ell{\rm Nrd}_{\QQ_\ell[G]}(\mathcal{M}_\ell^{\times})$. In addition, the result \cite[Prop. 2.5]{V} of Vinatier implies that the $p$-th power of the element $\tilde {\rm i}_w((\psi_{2,*}-1)(\tau_{L_w/\QQ_p}))$ also belongs to  $\prod_\ell {\rm Nrd}_{\QQ_\ell[G]}(\mathcal{M}_\ell^{\times})$.
In view of the isomorphisms (\ref{direct sum decomp DT}) and (\ref{tor}), we can therefore deduce that the element
\[ p\cdot\delta(\tilde {\rm i}_w(\widetilde \tau_w)) = \delta\bigl(\tilde {\rm i}_w((\psi_{2,*}-1)(\tau_{L_w/\QQ_p}))^p\bigr) + \delta\bigl(\tilde {\rm i}_w((\psi_{2,\ast}-1)(y_{L_w/\QQ_p}^{-1}))^p\bigr)\]
belongs to $ {\rm DT}(\ZZ[G]) = {\rm DT}(\ZZ_p[G])$ (since $\mathcal{M}_{\ell}=\ZZ_\ell[G]$ for all $\ell \neq p$, see \eqref{tor} and \eqref{useful decomp}).
	
Since $\mathfrak{a}_{L/\QQ}\in {\rm DT}(\ZZ_p[G])$ (by Proposition~\ref{Th: BBH thm}(i)), it follows that the element $p\cdot \xi$ also belongs to the subgroup ${\rm DT}(\ZZ_p[G])$ of $K_0(\Z_p[G], \QQ_p[G])$. For each prime $\ell$, we identify $K_0(\ZZ_\ell[G], \QQ_\ell[G])$ with a subgroup of $K_0(\ZZ[G], \QQ[G])$ by means of decomposition \eqref{K_0 proj K_0_p} and recall the homomorphism $j^c_{p, *}$ from \eqref{jlc def}. In this way the image $j_{p, \ast}^c(p\cdot \xi)$ is equal to $p\cdot \xi$ in $K_0(\Z_p[G], \QQ_p[G])$.
	
Set $\delta_{w,p} :=\delta^1_{\ZZ_p,\QQ_p^c,H}$ and let ${\rm i}^*_{w,p}$ denote the induction homomorphism $ K_0(\ZZ_p[H],\QQ_p^c[H]) \to K_0(\ZZ_p[G],\QQ_p^c[G])$. We note that the decomposition result in \eqref{Eq: frak a decomp} and Remark \ref{tame local remark} combine to imply that $\mathfrak{a}_{L/\QQ}={\rm i}^*_{w,p} (\mathfrak{a}_{L_w/\QQ_p})$. Then, 
\begin{align}\label{reduction} 
p\cdot \xi
=&\, p\cdot \bigl(j^c_{p, \ast} \bigl(\mathfrak{a}_{L/\QQ} + \delta(\tilde {\rm i}_w(\widetilde \tau_w)) ) \bigr) \nonumber \\
=&\, p\cdot {\rm i}^*_{w,p}\bigl(\mathfrak{a}_{L_w/\QQ_p} + \delta_{w,p} (j_p^c(\widetilde \tau_w))\bigr) \nonumber \\
=&\, p\cdot {\rm i}^*_{w,p}\bigl( \Delta(\mathcal{A}_{L_w/\QQ_p})- \delta_{w,p}\bigl( j_p^c(T^{(2)}_{L_w/\QQ_p})\bigr) - U_{L_w/\QQ_p} + \delta_{w,p} (j_p^c(\widetilde \tau_w))\bigr)\nonumber\\
=&\, p\cdot {\rm i}^*_{w,p}\bigl( \Delta(\mathcal{A}_{L_w/\QQ_p}) - U_{L_w/\QQ_p}\bigr).
\end{align}
Here, the second equality follows from \eqref{Diagram: ind} that $\delta \circ \tilde {\rm i}_w= {\rm i}^*_{w} \circ \delta_w$, the fact that $j_{p, *}^c \circ {\rm i}^*_{w} ={\rm i}^*_{w,p} \circ j_{p, *}^c$ (directly from the definitions \eqref{Eq: K_0 ind def} and \eqref{jlc def}) and \cite[pp. 580]{BB} that $ j_{p, \ast}^c \circ \delta_w = \delta_{w, p}  \circ j_{p}^c$.
The third equality follows directly from the explicit definition of $\mathfrak{a}_{L_w/\QQ_p}$ (see Definition \ref{local a def}), and the final one is true since $T^{(2)}_{L_w/\QQ_p} = \widetilde \tau_w$ as $\tau^H_{\QQ_p} = 1$. 
	
Now, since $p\cdot \xi$ belongs to ${\rm DT}(\ZZ_p[G])$, Theorem \ref{Prop: odd degree tor group order}(i) implies that  
\[ (p-1)|G|\cdot \xi = (1-1/p)|G|\cdot (p\cdot \xi) = 0.\]
	
As $e(w)$ is a non-trivial power of $p$, in order to prove that $(p-1)e(w)\cdot\xi = 0$, by Taylor's Fixed Point Theorem, it is therefore enough to show that the image in $K_0(\mathcal{O}_p^t[G],\QQ_p^c[G])$ of the element $e(w)\cdot \xi = \bigl(e(w)/p\bigr) \cdot (p\cdot \xi)$ is torsion and of order prime to $p$. 

Since the image of $U_{L_w/\QQ_p}$ in $K_0(\mathcal{O}_p^t[H],\QQ_p^c[H])$ vanishes (see \cite[Lem. 4.4]{B_phd}), the expression (\ref{reduction}) reduces us to showing that the image in $K_0(\mathcal{O}_p^t[H],\QQ_p^c[H])$ of $e(w)\cdot \Delta(\mathcal{A}_{L_w/\QQ_p})$ is torsion and of order prime to $p$. 
	
To prove this, we fix an element $a_p$ of $L_w$ such that $\mathcal{A}_{L_w/\QQ_p}= \ZZ_p[H]\cdot a_p$. Then, since the element $\delta_{F}$ (defined in \eqref{delta def}) is equal to $1$ when $F = \QQ_p$, the formula \cite[Lem. 4.16]{B_phd} of Breuning implies that, in $K_{0}(\mathbb{Z}_{p}[H], \mathbb{Q}^c_{p}[H])$, one has 
\[ \Delta(\mathcal{A}_{L_w/\QQ_p}) = \delta_{w,p} \left(\sum_{\chi\in \widehat{H}_p} (a_p|\chi)\cdot e_{\chi} \right). \]
	
We next note that, by the result \cite[Prop. 2.7]{V} of Vinatier, there exists a finite unramified extension $E$ of $\QQ_p$ such that if $q$ is the order of the residue field of $E$, for all $\chi\in \widehat{H}_p$, one has
\[ (a_p|\chi)^{(q-1)e(w)} \in {\rm Det}_\chi (\mathcal{O}_E[H]^\times). \]
It follows that, in terms of \cite[Chap. II, \S2, Lem. 2.1 and \S1, Lem. 1.6]{F83}, one has
\[ \bigl(\sum_{\chi\in \widehat{H}_p} (a_p|\chi)\cdot e_{\chi}\bigr)^{(q-1)e(w)} \in {\rm Nrd}_{\QQ_p^c[H]}\bigl( K_1(\mathcal{O}_E[H]) \bigr) .\] 
In particular, since the image of this element under $\delta_{w,p}$ represents $(q-1)e(w)\cdot \Delta(\mathcal{A}_{L_w/\QQ_p})$, and the composite homomorphism 
\[ K_1(\mathcal{O}_E[H]) \to K_1(\QQ_p^c[H]) \to K_0(\mathcal{O}_p^t[H],\QQ_p^c[H])\]
is zero (as $E/\QQ_p$ is unramified), we deduce that the image in $K_0(\mathcal{O}_p^t[H],\QQ_p^c[H])$ of $e(w)\cdot \Delta(\mathcal{A}_{L_w/\QQ_p})$ is torsion, and of order dividing $q-1$. 
	
Since $q-1$ is prime to $p$, this implies the result we desire and hence, completes the proof.\end{proof} 

We shall finish this article by making some remarks concerning the class of $[\mathcal{A}_{L/\QQ}]$ in the class group.

\begin{remark}\label{fin remark}
If we project the result of Theorem~\ref{vinatier lift} into the class group, the bound we obtain on the order of $[\mathcal{A}_{L/\QQ}]$ is weaker than the one proved by Vinatier in \cite[Th. 1]{V}. This is due to the fact that, in the class group, one knows that the class of $[\mathcal{A}_{L/\QQ}]$ belongs to the kernel subgroup ${\rm D}(\ZZ[G])$, which is again a $p$-group if $G$ has $p$-powered order (cf. \cite[pp. 404]{V}). However, this result is not shared by the torsion subgroup ${\rm DT}(\ZZ[G])$, and therefore we are unable to obtain a sharper bound in $K_0(\ZZ[G],\QQ^c[G])$.
\end{remark}

\begin{coro}
Under the hypothesis of Theorem~\ref{vinatier lift}, one has that $e(w) \cdot [\mathcal{A}_{L/\QQ}]=0$ in the class group. Moreover, if $L_w/\QQ_p$ is totally ramified, then the order of $[\mathcal{A}_{L/\QQ}]$ is bounded by $e(w)/p$.
\end{coro}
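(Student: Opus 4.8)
The strategy is to apply the map $\partial^0_{\ZZ,\QQ^c,G}\colon K_0(\ZZ[G],\QQ^c[G]) \to \Cl(\ZZ[G])$ of the long exact sequence \eqref{Diagram: Kseq} to the element whose annihilation by $(p-1)e(w)$ was established in Theorem~\ref{vinatier lift}, and then to exploit the additional information that $\Cl(\ZZ[G])$ contributes. First I would recall (from \cite[Th. 5.2(iv)]{BBH}, as used at the start of this section) that $\partial^0_{\ZZ,\QQ^c,G}(\mathfrak{a}_{L/\QQ}) = [\mathcal{A}_{L/\QQ}]$. Since the correction term ${\rm i}^*_{w}\bigl(\delta_{w}((\psi_{2,\ast}-1)(\tau'_{L_w/\QQ_p}))\bigr)$ lies in the image of $\delta_w = \partial^1_{\ZZ,\QQ^c,H}\circ({\rm Nrd}_{\QQ^c[H]})^{-1}$ composed with an induction map, and the composite $\partial^0_{\ZZ,\QQ^c,G}\circ{\rm i}^*_{w}\circ\partial^1_{\ZZ,\QQ^c,H}$ vanishes by exactness of the relevant rows in \eqref{Diagram: Kseq} together with compatibility of induction with the connecting maps, applying $\partial^0_{\ZZ,\QQ^c,G}$ to the element of Theorem~\ref{vinatier lift} gives exactly $[\mathcal{A}_{L/\QQ}]$. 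Hence $(p-1)e(w)\cdot[\mathcal{A}_{L/\QQ}] = 0$ in $\Cl(\ZZ[G])$.

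To upgrade this to $e(w)\cdot[\mathcal{A}_{L/\QQ}] = 0$, I would invoke the fact recalled in Remark~\ref{fin remark}: since $L/\QQ$ is of $p$-power degree, Erez's result that $\mathcal{A}_{L/\QQ}$ is projective together with \cite[pp. 404]{V} shows that $[\mathcal{A}_{L/\QQ}]$ lies in the kernel subgroup ${\rm D}(\ZZ[G])$, which is a $p$-group. Since $p-1$ is prime to $p$, multiplication by $p-1$ is an automorphism of ${\rm D}(\ZZ[G])$; therefore $(p-1)e(w)\cdot[\mathcal{A}_{L/\QQ}] = 0$ already forces $e(w)\cdot[\mathcal{A}_{L/\QQ}] = 0$. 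This proves the first assertion.

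For the sharper bound $e(w)/p$ under the extra hypothesis that $L_w/\QQ_p$ is totally ramified, the key observation is that in this case $H = I_w$ and $B_w = \QQ_p$, so the decomposition subgroup coincides with the inertia subgroup. I would then revisit the argument of Theorem~\ref{vinatier lift} and replace the use of Theorem~\ref{Prop: odd degree tor group order}(i) (which only gives the exponent bound $(p-1)|G|$, hence $(p-1)e(w)$ after the fixed-point reduction) by the stronger Theorem~\ref{Prop: odd degree tor group order}(ii): combining with the functoriality of $\mathfrak{a}_{L/\QQ}$ under coinflation (Remark~\ref{functorial remark}(ii)) and the vanishing of $\mathfrak{c}_{\widetilde{L}/\QQ}$ for the quotients $\widetilde{L}$ with cyclic Galois group over $\QQ$ in which the $p$-adic place is totally ramified, exactly as in the proof of Proposition~\ref{p power thm1 pre prop}, one finds that $p\cdot\xi$ lies in the kernel of the diagonal map of Theorem~\ref{Prop: odd degree tor group order}(ii). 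This improves the annihilator of $\xi$ (and hence, after projecting to $\Cl(\ZZ[G])$ and using that $p-1$ acts invertibly on ${\rm D}(\ZZ[G])$) from $e(w)$ to $e(w)/p$.

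The main obstacle I anticipate is bookkeeping around the total-ramification hypothesis: one must check carefully that the hypothesis really does make $\Xi(H)$-quotients contribute trivially to $\mathfrak{c}$, i.e. that for every $H' \in \Xi(H)$ the place remains totally ramified in the quotient extension so that \eqref{Eq: frak c exp} forces $\mathfrak{c}$ to vanish there, and that Proposition~\ref{Th: BBH thm}(ii) genuinely applies to each such cyclic quotient (which requires $\QQ_p$ to be absolutely unramified — true — and cyclic inertia — true by Lemma~\ref{Prop: local abelian inertia p-group}(ii) once we know $H$ is abelian, which follows from Lemma~\ref{Prop: local abelian inertia p-group}(ii) since $H = I_w$ is the inertia group of a weakly ramified extension and is therefore either a $p$-group or cyclic prime-to-$p$; in the totally ramified $p$-power degree case it is a $p$-group, hence equals $\Gamma_1$ and is abelian of exponent $p$). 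Once these compatibilities are confirmed, the rest is a routine transcription of the argument already given for Theorem~\ref{vinatier lift}.
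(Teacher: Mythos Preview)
Your argument for the first assertion is correct and takes a slightly different route from the paper. The paper combines the bounds coming from Theorem~\ref{Thm: p-group main intro} (projected to the class group) and from Theorem~\ref{vinatier lift}, computing $\gcd(n(L/\QQ)/p,(p-1)e(w))$, which divides $e(w)$; you instead use only the bound $(p-1)e(w)$ from Theorem~\ref{vinatier lift} together with the fact (recorded in Remark~\ref{fin remark}) that $[\mathcal{A}_{L/\QQ}]\in{\rm D}(\ZZ[G])$ has $p$-power order. Your approach is arguably cleaner here, since it does not invoke Theorem~\ref{Thm: p-group main intro} at all.

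For the second assertion, however, your proposed route is unnecessarily indirect and contains a genuine gap. You propose to revisit the proof of Theorem~\ref{vinatier lift} and replace Theorem~\ref{Prop: odd degree tor group order}(i) by~(ii), but this does not yield the improvement you claim: Theorem~\ref{Prop: odd degree tor group order}(ii) applied to $\Gamma=G$ would only give $|G|\cdot\xi=0$ (the kernel has exponent dividing $|G|/p$, and you are applying it to $p\cdot\xi$), which has the same $p$-part as the bound already obtained from part~(i). To descend to $e(w)/p$ you would still need the Taylor fixed-point step together with Vinatier's resolvent estimate, and the latter produces $(q-1)e(w)$, not $(q-1)e(w)/p$. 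Moreover, showing that $p\cdot\xi$ lies in the kernel of the diagonal map requires controlling the coinflation of the correction term $\delta(\tilde{\rm i}_w(\widetilde\tau_w))$, which you do not address; and your bookkeeping paragraph conflates $\Xi(G)$ with $\Xi(H)$, whereas it is $\Xi(G)$ that is relevant since $p\cdot\xi\in{\rm DT}(\ZZ[G])$.

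The paper's argument for the second assertion is far more direct and bypasses Theorem~\ref{vinatier lift} entirely: when $L_w/\QQ_p$ is totally ramified one has $n(L/\QQ)=|H|=e(w)$, so Theorem~\ref{Thm: p-group main intro} (more precisely the argument proving Proposition~\ref{p power thm1 pre prop}, referenced as ``the argument above Remark~\ref{remark above}'') already gives $(e(w)/p)\cdot\mathfrak{a}_{L/\QQ}=0$ in $K_0(\ZZ[G],\QQ[G])$, and projecting to $\Cl(\ZZ[G])$ via $\partial^0_{\ZZ,\QQ^c,G}$ finishes.
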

\begin{proof}
The first claim will be proved by combining the results of Theorem \ref{Thm: p-group main intro} and \ref{vinatier lift}. We first note that ${\rm i}^*_{w}\circ \delta_w = \delta\circ \tilde {\rm i}_w =\partial^1_{\ZZ,\QQ^c,G}\circ ({\rm Nrd}_{\QQ^c[G]})^{-1} \circ \tilde {\rm i}_w$. Then the image in $\Cl(\Z[G])$ of the correction term ${\rm i}^*_{w}\bigl(\delta_{w}((\psi_{2,\ast}-1)(\tau'_{L_w/\QQ_p}))\bigr)$ in Theorem~\ref{vinatier lift} vanishes since $\partial^0_{\ZZ,\QQ^c,G} \circ \partial^1_{\ZZ,\QQ^c,G}$ is a zero homomorphism (by the exactness of \eqref{Diagram: Kseq}). We also recall from \cite[Th. 5.2(iv)]{BBH} that $\partial_{\ZZ, \QQ^c, G}^{0} (\mathfrak{a}_{L/\QQ})= [\mathcal{A}_{L/\QQ}]$ and therefore deduce that, under the surjective map $K_0(\ZZ[G],\QQ^c[G])\to \Cl(\ZZ[G])$, the order of $[\mathcal{A}_{L/\QQ}]$ is bounded by 
\[
\gcd(n(L/K)/p, (p-1)e(w)) = e(w).
\]
In this way, we obtain the result of Vinatier. 

Now suppose that $|H|= e(w)$, the argument above Remark \ref{remark above} implies that $e(w)/p \cdot \mathfrak{a}_{L/\QQ}=0$ in $K_0(\ZZ[G],\QQ^c[G])$ and so claim (ii) is true.
\end{proof}

%The text proper ends here

%\newpage

\end{document}